\theoremstyle{plain}
\newtheorem{theorem}{Theorem}[section]
\newtheorem{lemma}[theorem]{Lemma}
\newtheorem{corollary}[theorem]{Corollary}
\theoremstyle{definition}
\newtheorem{definition}[theorem]{Definition}
\newtheorem{example}[theorem]{Example}
\newtheorem*{questionn}{Question}
\newtheorem*{conventions}{Conventions}
\newtheorem{subnothing*}[sub]{}
\newtheorem{somefacts}[theorem]{Some facts}
\theoremstyle{remark}
\newtheorem*{remark*}{Remark}
\newtheorem{remark}[theorem]{Remark}
\newcommand{\Aut}{	\operatorname{{\rm Aut}}}
\newcommand{\Spec}{	\operatorname{{\rm Spec}}}
\newcommand{\trdeg}{	\operatorname{{\rm trdeg}}}
\newcommand{\Frac}{	\operatorname{{\rm Frac}}}
\newcommand{\Pic}{	\operatorname{{\rm Pic}}}
\newcommand{\Cl}{	\operatorname{{\rm Cl}}}
\newcommand{\lnd}{	\operatorname{\mbox{\sc lnd}}}
\newcommand{\klnd}{	\operatorname{\mbox{\sc klnd}}}
\newcommand{\Klnd}{	\operatorname{\mbox{\underline{\sc klnd}\,}}}
\newcommand{\Gal}{	\operatorname{{\rm Gal}}}
\newcommand{\Dan}{	\operatorname{{\Dgoth}}}
\newcommand{\ML}{	\operatorname{{\rm ML}}}
\newcommand{\danml}{	\operatorname{\mbox{\sc danml}}}
\newcommand{\sml}{	\operatorname{\mbox{\sc sml}}}
\newcommand{\setspec}[2]{\big\{\,#1\, \mid \,#2\, \big\}}
\newlength{\mylength}
\newcommand{\Nat}{\ensuremath{\mathbb{N}}}
\newcommand{\Rat}{\ensuremath{\mathbb{Q}}}
\newcommand{\Comp}{\ensuremath{\mathbb{C}}}
\newcommand{\Reals}{\ensuremath{\mathbb{R}}}
\newcommand{\aff}{\ensuremath{\mathbb{A}}}
\newcommand{\bk}{{\ensuremath{\rm \bf k}}}
\newcommand{\ck}{{\bar{\bk}}}
\newcommand{\kk}[1]{\bk^{[#1]}}
\newcommand{\Dgoth}{{\ensuremath{\mathfrak{D}}}}
\newcommand{\pgoth}{{\ensuremath{\mathfrak{p}}}}
\newcommand{\Aeul}{\EuScript{A}}
\newcommand{\Beul}{\EuScript{B}}
\newcommand{\Deul}{\EuScript{D}}
\newcommand{\Neul}{\EuScript{N}}
\newcommand{\Oeul}{\EuScript{O}}
\newcommand{\Reul}{\EuScript{R}}
\newcommand{\isom}{\cong}
\renewcommand{\epsilon}{\varepsilon}
\renewcommand{\phi}{\varphi}
\renewcommand{\emptyset}{\varnothing}
\newcommand{\rien}[1]{}
\begin{document}
\renewcommand{\baselinestretch}{1.07}


\title[Affine surfaces with trivial Makar-Limanov invariant]
{Affine surfaces \\  with trivial Makar-Limanov invariant}

\author{Daniel Daigle}

\address{Department of Mathematics and Statistics\\
	University of Ottawa\\
	Ottawa, Canada\ \ K1N 6N5}

\email{ddaigle@uottawa.ca}

\thanks{Research supported by a grant from NSERC Canada.}

\keywords{Locally nilpotent derivations, group actions,
Danielewski surfaces, affine surfaces, Makar-Limanov invariant,
absolute constants}

{\renewcommand{\thefootnote}{}
\footnotetext{2000 \textit{Mathematics Subject Classification.}
Primary: 14R10.
Secondary: 14R05, 14R20.}}

\begin{abstract} 
We study the class of $2$-dimensional affine $\bk$-domains $R$
satisfying $\ML(R)=\bk$, where $\bk$ is an arbitrary field of
characteristic zero.
In particular, we obtain the following result:
{\it Let $R$ be a localization of a polynomial ring in
finitely many variables
over a field of characteristic zero. If $\ML(R) = K$ for some field
$K \subset R$ such that $\trdeg_KR=2$,
then $R$ is $K$-isomorphic to $K[X,Y,Z]/(XY-P(Z))$ for some nonconstant
$P(Z) \in K[Z]$.}
\end{abstract}
\maketitle
  
\vfuzz=2pt


\section{Introduction}

Let us recall the definition of the Makar-Limanov invariant:

\begin{definition}\label{fefefefeer}
If $R$ is a ring of characteristic zero,
a derivation $D: R\to R$ is said to be \textit{locally nilpotent}
if for each $r\in R$ there exists $n \in \Nat$ (depending on $r$)
such that $D^n(r)=0$.
We use the following notations:
\begin{align*}
\lnd(R) &= \textrm{set of locally nilpotent derivations $D:R\to R$} \\
\klnd(R) &= \setspec{ \ker D }{ D \in \lnd(R) \text{ and } D \neq 0} \\
\ML(R) &= \bigcap_{ D \in \text{\sc lnd}(R) } \ker(D).
\end{align*}
\end{definition}

We are interested in the class of $2$-dimensional affine $\bk$-domains $R$
satisfying $\ML(R)=\bk$, where $\bk$ is a field of characteristic zero.
The corresponding class of
affine algebraic surfaces was studied by several authors
(\cite{BandML:AffSurfAK},
\cite{Bertin:Pinceaux},
\cite{DaiRuss:HomolPlanes1},
\cite{Dub:CompletionsNormAffSurf},
\cite{Dub:DanFies},
\cite{Gur-Miy:ML},
\cite{Masuda-Miy:QHomPlanes}, in particular),
but almost always under the assumption that $\bk$ is algebraically closed,
or even $\bk=\Comp$.
In this paper we obtain some partial results valid when $\bk$
is an arbitrary field of characteristic zero.
We are particularly interested in the following subclass:

\begin{definition}
Given a field $\bk$ of characteristic zero,
let $\Dan(\bk)$ be the class of $\bk$-algebras isomorphic to
$ \bk[X,Y,Z] / (XY - \phi(Z)) $
for some nonconstant polynomial in one variable
$\phi(Z) \in \bk[Z] \setminus \bk$,
where $X,Y,Z$ are indeterminates over $\bk$.
\end{definition}

The class $\Dan( \bk )$ was studied in
\cite{Dai:slice1}, \cite{Dai:LSC} and \cite{LML:GpsAutoms}, in particular.
It is well-known that if $R \in \Dan( \bk )$ then $R$ is a $2$-dimensional 
normal affine domain satisfying $\ML(R)=\bk$.
It is also known that the converse is not true,
which raises the following:
\begin{questionn}
\it Suppose that $R$ is a $2$-dimensional affine $\bk$-domain
with $\ML(R)=\bk$.
Under what additional assumptions can we infer that $R \in \Dan(\bk)$?
\end{questionn}
Section~3 completely answers this question
in the case where $R$ is a smooth $\bk$-algebra.  This is achieved by
reducing to the case $\bk=\Comp$, which was solved by
Bandman and Makar-Limanov.
This reduction is non-trivial,
and makes essential use of the main result of Section~2.
Also note Corollary~\ref{NewCorUFD}, which gives a pleasant answer to the
above question in the factorial case.
Then we derive several consequences from Section~3, 
for instance
consider the following special case of Theorem~\ref{dwddwdwdwddwd}:
\begin{quote}
\it Let $R$ be a localization of a polynomial ring in finitely many variables
over a field of characteristic zero. If $\ML(R) = K$ for some field
$K \subset R$ such that $\trdeg_KR=2$, then $R \in \Dan( K )$.
\end{quote}
In turn, this has consequences in the study of $G_a$-actions on $\Comp^n$.

\begin{conventions}
All rings and algebras are commutative, associative and unital.
If $A$ is a ring, we write $A^*$ for the units of $A$; if $A$ is a domain,
$\Frac A$ is its field of fractions.
If $A \subseteq B$ are rings,
``\,$B = A^{[n]}$\,'' means that $B$ is $A$-isomorphic to the polynomial
algebra in $n$ variables over $A$.
If $L/K$ is a field extension,
``\,$L = K^{(n)}$\,'' means that $L$ is a purely transcendental extension
of $K$ and $\trdeg_KL=n$ (transcendence degree).
\end{conventions}

\smallskip
In \cite{Dai:LSC}, one defines a Danielewski surface to be a pair 
$(R,\bk)$ such that $R \in \Dan(\bk)$.
In the present paper we avoid using the term
``Danielewski surface'' in that sense, because it is 
incompatible with accepted usage.
The reader should keep this in mind when consulting \cite{Dai:LSC}
(our main reference for Section~2).

\section{Base extension}

Let $\bk$ be a field of characteristic zero.
It is clear that if $R \in \Dan( \bk )$
then $K \otimes_\bk R \in \Dan( K )$ for every field
extension $K/\bk$.
However, if $K \otimes_\bk R \in \Dan( K )$ for some $K$,
it does not follow that $R \in \Dan( \bk )$
(see Example~\ref{hghgoioioiooiop}, below).

\begin{remark}\label{rururuurytytyru}
{\it If $R \in \Dan( \bk )$
then $\Spec R$ has infinitely many $\bk$-rational points.}
(Indeed, if $R = \bk[X,Y,Z] / (XY - \phi(Z))$ then
there is a bijection between
the set of $\bk$-rational points of $\Spec R$
and the zero-set in $\bk^3$ of the polynomial $XY-\phi(Z)$.)
\end{remark}

\begin{example}\label{hghgoioioiooiop}
Let $A = \Reals[X,Y,Z] / (f)$, where $f = X^2 + Y^2 + Z^2$.
Viewing $f$ as an element of $\Comp[X,Y,Z]$ we have 
$f = (X+iY)(X-iY) + Z^2$ (where $i^2=-1$),
so $\Comp \otimes_\Reals A \isom \Comp[U,V,W] / (UV+W^2) \in \Dan( \Comp )$.
As $\Spec A$ has only one $\Reals$-rational point,
$A \notin \Dan( \Reals )$ by Remark~\ref{rururuurytytyru}.
Thus
$$
\text{$A \notin \Dan( \Reals )$  and  
$\Comp \otimes_\Reals A \in \Dan( \Comp )$.}
$$
Note%
\footnote{A different proof that $\ML(A)=A$ is given
in \cite[9.21]{Freud:Book}.}
that Theorem~\ref{DanML} (below) implies that $\ML(A) = A$.
Moreover, if we define
$A' = \Reals[U,V,W] / (UV+W^2) \in \Dan( \Reals )$
then $A \not\isom A'$
but $\Comp \otimes_\Reals A \isom \Comp \otimes_\Reals A'$.
\end{example}

\begin{theorem}\label{DanML}
For an algebra $R$ over a field $\bk$ of characteristic zero,
the following conditions are equivalent:
\begin{enumerate}

\item[(a)] $R \in \Dan( \bk )$

\item[(b)] $\ML(R) \neq R$ and 
there exists a field extension $K/\bk$ such that
$K \otimes_\bk R \in \Dan( K )$.

\end{enumerate}
\end{theorem}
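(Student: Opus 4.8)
The implication (a)$\Rightarrow$(b) is trivial: if $R \in \Dan(\bk)$ then $\ML(R) = \bk \neq R$, and one may take $K = \bk$, since $\bk \otimes_\bk R = R \in \Dan(\bk)$. So the whole content is (b)$\Rightarrow$(a). The plan is as follows. Assume $\ML(R) \neq R$ and $K \otimes_\bk R \in \Dan(K)$ for some field extension $K/\bk$. Write $R_K = K \otimes_\bk R$. From $R_K \in \Dan(K)$ we know $R_K$ is a $2$-dimensional normal affine $K$-domain with $\ML(R_K) = K$; I would first descend these structural facts to $R$: since $K/\bk$ is faithfully flat, $R$ is a $2$-dimensional affine $\bk$-domain, and normality descends as well (faithfully flat descent of normality, or: $R \hookrightarrow R_K$ and $R_K$ normal implies $R$ is integrally closed in $\Frac R$, using that $R_K \cap \Frac R = R$). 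The key external input will be a structural classification of the surfaces $\Dan(K)$ — presumably from \cite{Dai:LSC}, the stated main reference for this section — describing $\Dan(K)$ intrinsically, e.g. in terms of the existence of two locally nilpotent derivations with distinct kernels, together with some geometric normalization (a fibration $\Spec R_K \to \aff^1_K$ with generic fibre $\aff^1$ and a single degenerate fibre that is a disjoint union of reduced affine lines).

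The heart of the argument is to promote a locally nilpotent derivation, or the relevant fibration structure, from $R_K$ down to $R$. Here I would use the hypothesis $\ML(R) \neq R$: there exists a nonzero $D \in \lnd(R)$, and $A := \ker D$ is a $1$-dimensional $\bk$-affine domain (a standard fact for $\lnd$ of $2$-dimensional affine domains, essentially because $R$ is finitely generated as an $A$-algebra and $\Frac R = (\Frac A)^{(1)}$). Base-changing, $D$ extends to $D_K := \id_K \otimes D \in \lnd(R_K)$ with $\ker D_K = K \otimes_\bk A =: A_K$. The point is to show $A \isom \bk^{[1]}$, i.e. $A$ is a polynomial ring in one variable over $\bk$. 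I would argue: $A_K = \ker D_K$, and since $R_K \in \Dan(K)$ one knows every element of $\klnd(R_K)$ is a polynomial ring $K^{[1]}$ (this is part of the known structure of Danielewski surfaces — the plane $\aff^1_K$ appears as the "horizontal" quotient); hence $A_K = K^{[1]}$. Now descend: $A$ is a $1$-dimensional normal (descent again) affine $\bk$-domain with $A_K = K \otimes_\bk A \isom K^{[1]}$, so $\Spec A_K \isom \aff^1_K$; I need to conclude $\Spec A \isom \aff^1_\bk$, equivalently $A = \bk^{[1]}$. A smooth rational affine curve over $\bk$ with trivial Picard group and $A^* = \bk^*$ — all of which descend or are forced here — is $\aff^1_\bk$ if it has a $\bk$-rational point; and a $\bk$-rational point exists because $R$ has one (use $\ML(R) \neq R$ to produce one, or descend a suitable point), mapping to $\Spec A$. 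So $A = \bk[t]$ for some $t \in R$.

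With $A = \bk[t] \subseteq R$ in hand, I would reconstruct the Danielewski presentation of $R$ over $\bk$. The inclusion $\bk[t] \hookrightarrow R$ gives a morphism $\pi : \Spec R \to \aff^1_\bk$; after base change to $K$ this is the standard $\aff^1$-fibration on the Danielewski surface $\Spec R_K$, whose only degenerate fibre sits over a $K$-point $\phi(Z) = $ (something) — more precisely $R_K = K[X,Y,Z]/(XY - \phi(Z))$ with $t \mapsto$ a coordinate. I would pull the defining data back to $\bk$: the locus where $D_K$ (equivalently $D$) degenerates is the zero set of an ideal defined over $\bk$, cut out by a single polynomial in $t$, say $P(t) \in \bk[t]$, nonconstant because the fibration genuinely degenerates (it does over $K$, hence over $\bk$). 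Then I would exhibit $x, y \in R$ with $D(x)$, $Dy$ and the relation $xy = P(t)$: concretely, $R_K$ being $K[X,Y,Z]/(XY-\phi(Z))$ with a chosen slice, the elements $X, Y$ can be taken in $R_K^{D_K = 0\text{-step}}$, and a Galois-descent / trace argument over $\bk$ (or simply: $R$ is the $\mathrm{Gal}$- or $\mathrm{Aut}(K/\bk)$-invariants of $R_K$ when $K/\bk$ is chosen algebraic and Galois, and a general $K$ reduces to this case by a specialization/finitely-generated-subextension argument) produces $x, y, P \in R$ over $\bk$ with $R = \bk[x, y, t]/(xy - P(t))$ after checking the map $\bk[X,Y,Z]/(XY-P(Z)) \to R$ is an isomorphism — which can be verified after the faithfully flat base change $-\otimes_\bk K$, where it becomes the known presentation of $R_K$. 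Hence $R \in \Dan(\bk)$.

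The main obstacle I anticipate is the descent of the \emph{presentation} (not just the isomorphism type) from $R_K$ to $R$: extracting $x, y \in R$ and $P \in \bk[t]$ realizing $xy = P(t)$ requires more than descent of abstract properties, because a priori the Danielewski coordinates on $R_K$ need not be defined over $\bk$ (Example~\ref{hghgoioioiooiop} shows the isomorphism type can fail to descend when $\ML(R) = R$, so the hypothesis $\ML(R)\neq R$ must be used decisively exactly here — it guarantees a $\bk$-rational $D$, hence a $\bk$-defined fibration and a $\bk$-rational point, which is what pins down the coordinates). Reducing to $K/\bk$ algebraic and then Galois — legitimate since all the hypotheses and the finitely many generators and relations live over a finitely generated, hence finite, subextension after possibly enlarging — and then taking invariants, is the cleanest route; the residual work is verifying that the invariant subring is exactly $R$ and carries the asserted presentation, which can be pushed back up to $R_K$ by faithful flatness.
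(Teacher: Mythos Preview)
Your overall strategy---pick $0 \neq D \in \lnd(R)$, show $\ker D = \bk^{[1]}$ by descent from $\ker D_K = K^{[1]}$, locate the degenerate fibre over a $\bk$-point, then descend the remaining Danielewski coordinates---is the paper's, but several steps contain real gaps. First, your reduction ``finitely generated, hence finite, subextension'' is simply false: $K/\bk$ may be transcendental, and nothing in the hypotheses produces an algebraic descent datum. The paper goes in the \emph{opposite} direction: it enlarges $K$ to an algebraically closed field $\ck$ (so $\ck \otimes_\bk R \in \Dan(\ck)$ automatically) and uses that in characteristic zero the fixed field $\ck^{\Gal(\ck/\bk)}$ equals $\bk$ even though $\ck/\bk$ is not algebraic. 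Second, your argument that $A := \ker D$ equals $\bk^{[1]}$ via ``$\Spec R$ has a $\bk$-rational point because $\ML(R) \neq R$'' is unjustified; the paper gets $A = \bk^{[1]}$ directly from $\ck \otimes_\bk A = \ck^{[1]}$ (exactness of $\ck\otimes_\bk\underline{\ \ }$ applied to $0\to A\to R\xrightarrow{D}R$), with no appeal to rational points of $\Spec R$.

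The most delicate gap is the descent of the presentation. Writing $A = \bk[f]$ and choosing $s \in R$ with $D(s)$ a generator of $A \cap D(R)$, the paper shows (via an irreducible factorization $\bar D = \alpha \Delta$ over $\ck$ and \cite[2.8]{Dai:LSC}) that, after a harmless rescaling, $\Delta(s) = f - \lambda$ for a $\lambda \in \ck$ characterized intrinsically by: $\bar R / (f - c)\bar R$ fails to be a domain iff $c = \lambda$. This characterization is visibly $\Gal(\ck/\bk)$-invariant, forcing $\lambda \in \bk$, so $x_1 := f - \lambda \in R$. For the second coordinate the paper does not invoke abstract Galois descent of an isomorphism; instead it computes the ideal $J = \bk[s] \cap x_1 R$ of $\bk[s]$, shows by a basis argument that its extension to $\ck[s]$ coincides with $\bar J = \ck[s]\cap x_1\bar R$, and identifies a generator $\phi(s)$ of $J$ with $x_1 x_2$ up to a unit of $\ck$---whence $x_2 = \phi(s)/x_1 \in \Frac(R) \cap \bar R = R$. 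Your ``verify after faithfully flat base change'' is the right instinct, but the actual content is this explicit construction of $x_1$ and $x_2$ inside $R$, which your sketch does not supply.
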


We shall prove this after some preparation.

\begin{somefacts}\label{fofpofpofpofpof}
Refer to \cite{VDE:book} or \cite{Freud:Book} for background on
locally nilpotent derivations.
Statement \eqref{nnnntttjptjnptjtpjn} is due to Rentschler \cite{Rent}
and \eqref{cvvcccpcpccpcpvcpvv}
to Nouaz\'e and Gabriel~\cite{GabNou} 
and Wright~\cite{Wright:JacConj}. 
\begin{enumerate}
\renewcommand{\theenumi}{\alph{enumi}}

\item\label{vcvcvctgfhyegf}
If $A \in \klnd(B)$ where $B$ is a domain of characteristic zero then 
$A$ is \textit{factorially closed\/} in $B$ (i.e., if $x,y \in B\setminus\{0\}$ and
$xy \in A$ then $x, y \in A$).
It follows that $\ML(B)$ is factorially closed in $B$.
Any factorially closed subring $A$ of $B$ is in particular
\textit{algebraically closed\/} in $B$ (i.e., if $x \in B$ is a root
of a nonzero polynomial with coefficients in $A$ then $x\in A$) and
satisfies $A^*=B^*$
(in particular, any field contained in $B$ is contained in $A$).

\item\label{shhohsohsshhshoohs}
Let $B$ be a noetherian domain of characteristic zero.
If $0 \neq D \in \lnd(B)$
then $D = \alpha D_0$ for some $\alpha \in \ker(D)$ and
$D_0 \in \lnd(B)$ where $D_0$ is \textit{irreducible\/}
(i.e., the only principal ideal of $B$ which contains $D_0(B)$ is $B$).

\item\label{nnnntttjptjnptjtpjn}
Let $B = \kk2$ where $\bk$ is a field of characteristic zero.
If $D \in \lnd(B)$ is irreducible then
there exist $X,Y$ such that $B=\bk[X,Y]$ and 
$D = \partial / \partial Y$.

\item\label{cvvcccpcpccpcpvcpvv}
Let $B$ be a $\Rat$-algebra. If $D \in \lnd(B)$
and $s\in B$ satisfy $Ds\in B^*$ then $B =A[s]=A^{[1]}$ where $A = \ker D$.

\end{enumerate}
\end{somefacts}

\begin{lemma}\label{bvbvbvbryyryyrt}
Let $\bk$ be a field of characteristic zero and $R$ a $\bk$-algebra
satisfying:
$$
\textit{there exists a field extension $\ck/\bk$ such that
$\ck \otimes_\bk R \in \Dan( \ck )$.}
$$
Then $R$ is a two-dimensional normal affine domain over $\bk$
and $R^* = \bk^*$.
\end{lemma}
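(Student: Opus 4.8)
The plan is to derive every assertion by faithfully flat descent along $\bk \to \ck$. Write $S = \ck \otimes_\bk R$; by hypothesis $S \in \Dan(\ck)$, so $S$ is a two-dimensional normal affine $\ck$-domain, and $\ML(S) = \ck$ is factorially closed in $S$, whence $S^* = \ck^*$ (by~\eqref{vcvcvctgfhyegf}). Since $\ck$ is a nonzero $\bk$-vector space, $\bk \to \ck$ is faithfully flat; fixing a $\bk$-basis $\{v_i\}_{i \in I}$ of $\ck$ with $v_0 = 1$ gives a direct-sum decomposition $S = \bigoplus_{i \in I}(v_i \otimes_\bk R)$ of $S$ as an $R$-module, which in particular identifies $R = 1 \otimes R$ with a subring of $S$. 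Being a subring of the domain $S$, $R$ is a domain; and a short computation with this decomposition yields the two facts $\ck \cap R = \bk$ and $\Frac(R) \cap S = R$ (the latter taken inside $\Frac(S)$, and using that $R$ is a domain).

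Next, $R$ is affine over $\bk$. The finitely many $\ck$-algebra generators of $S$ are each a finite $\bk$-linear combination of elements $1 \otimes r_i$ with $r_i \in R$, so the $\bk$-subalgebra $R_0 \subseteq R$ generated by these (finitely many) $r_i$ satisfies: $\ck \otimes_\bk R_0 \to S$ is surjective. Tensoring $0 \to R_0 \to R \to R/R_0 \to 0$ with the flat $\bk$-module $\ck$ gives $\ck \otimes_\bk (R/R_0) = 0$, hence $R = R_0$ by faithful flatness. Thus $R$ is a finitely generated $\bk$-domain, and $\dim R = \dim S = 2$, because the Krull dimension of a finitely generated algebra over a field is unchanged under extension of the base field (equivalently, $\trdeg_\bk \Frac(R) = \trdeg_\ck \Frac(S) = 2$).

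For normality, let $x \in \Frac(R)$ be integral over $R$; then $x$ is integral over $S$, so $x \in S$ since $S$ is normal, and therefore $x \in \Frac(R) \cap S = R$. Hence $R$ is normal. (Alternatively, one may simply invoke faithfully flat descent of normality.) Finally, if $u \in R^*$ then $1 \otimes u \in S^* = \ck^*$, so $1 \otimes u \in \ck \cap R = \bk$, and being a unit it lies in $\bk^*$; since $\bk^* \subseteq R^*$ is obvious, $R^* = \bk^*$. The steps most deserving of care are the descent of finite generation and of normality along a faithfully flat extension, but both are standard facts, so I anticipate no real obstacle; everything else is bookkeeping with the basis $\{v_i\}$.
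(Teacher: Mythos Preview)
Your proof is correct and takes essentially the same approach as the paper: fix a $\bk$-basis of $\ck$ containing $1$, embed $R$ in $\bar R = \ck\otimes_\bk R$, establish $\ck \cap R = \bk$ and $\bar R \cap \Frac R = R$, and then descend each property (domain, units, normality, affineness, dimension) from $\bar R$ to $R$. The only cosmetic differences are that the paper phrases the finite-generation descent as an explicit basis/spanning-set argument rather than an exact-sequence/faithful-flatness one, and uses Noether normalization rather than transcendence degree for the dimension; one small slip in your write-up: the $\ck$-algebra generators of $S$ are $\ck$-linear (not $\bk$-linear) combinations of elements $1\otimes r_i$, though your argument is unaffected.
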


\begin{proof}
This is rather simple but it will be convenient to refer to this proof later.
Choose a field extension $\ck/\bk$ such that
$\ck \otimes_\bk R \in \Dan( \ck )$ and
let $\bar R = \ck \otimes_\bk R$.
As $R$ is a flat $\bk$-module, the canonical homomorphism
$\bk \otimes_\bk R \to \ck \otimes_\bk R$ is injective,
so we may regard $R$ as a subring of $\bar R$.
In particular, $R$ is an integral domain and we have the diagram:
\begin{equation*} 
\raisebox{6mm}{\xymatrix{\ck\  \ar @{^(->}[r] & \bar R\  \ar @{^(->}[r] 
& S^{-1}\bar R \ar @{^(->}[r] &   \Frac \bar R \\
\bk\ \ar @{^(->}[r] \ar @{^(->}[u] &  R\  \ar @{^(->}[r] \ar @{^(->}[u]  
&   \Frac R \ar @{^(->}[u]  }}
\end{equation*}
where $S = R \setminus \{0\}$.
Let $\Beul$ be a basis of $\ck$ over $\bk$ such that $1 \in \Beul$.
Note that $\Beul$ is also a basis of the free $R$-module $\bar R$
and of the vector space $S^{-1} \bar R$ over $\Frac R$.
It follows:
\begin{equation}\label{kdfjwehfui}
\ck \cap R = \bk  \quad \textit{and} \quad \bar R \cap \Frac R = R .
\end{equation}
As $\bar R \in \Dan( \ck )$,  \cite[2.3]{Dai:LSC} implies that
$\bar R^* = \ck^*$ and that $\bar R$ is a normal domain;
so \eqref{kdfjwehfui} implies that $R^* = \bk^*$ and that $R$
is a normal domain.  Also:
\begin{equation}\label{xzsqeasfqwcvsoioi}
\textit{If $E$ is a subset of $R$ such that $\ck[ E ] = \bar R$,
then $\bk[E] = R$.}
\end{equation}
Indeed, $\Beul$ is a basis of the $R$-module $\bar R$
and a spanning set of the $\bk[ E ]$-module $\bar R$;
as $\bk[E] \subseteq R$, it follows that $\bk[E] = R$.

Note that $R$ is affine over $\bk$, 
by \eqref{xzsqeasfqwcvsoioi} and the fact that 
$\bar R$ is affine over $\ck$.
Let $n = \dim R$ then, by Noether Normalization Lemma,
there exists a subalgebra $R_0 = \kk n$ of $R$ over which $R$ is integral.
Then $\bar R = \ck \otimes_\bk R$ is integral over
$\ck \otimes_\bk R_0 = \ck^{[n]}$, so $n = \dim \bar R = 2$.
\end{proof}

We borrow the following notation from \cite[2.1]{Dai:LSC}.

\begin{definition} \label{danielewski}
Given a $\bk$-algebra $R$, let
$\Gamma_\bk(R)$ denote the (possibly empty) set of ordered triples
$(x_1,x_2,y)\in R\times R\times R$ satisfying:
\begin{quote}\it
The $\bk$-homomorphism $\bk[X_1,X_2,Y] \to R$ defined by
$$
\text{$X_1\mapsto x_1$, $X_2\mapsto x_2$ and $Y\mapsto y$}
$$
is surjective and has kernel equal to $(X_1X_2 - \phi(Y))\bk[X_1,X_2,Y]$
for some nonconstant polynomial in one variable $\phi(Y)\in\bk[Y]$.
\end{quote}
Note that $R \in \Dan( \bk )$ if and only if $\Gamma_\bk(R)\neq\emptyset$.
\end{definition}

\begin{proof}[Proof of Theorem~\ref{DanML}]
That $R \in \Dan( \bk )$ implies $\ML(R) = \bk$
is well-known (for instance it follows from part~(d) of \cite[2.3]{Dai:LSC}),
so it suffices to prove that (b) implies (a).

Suppose that $R$ satisfies (b).
Note that if $K/\bk$ is a field extension satisfying
$K \otimes_\bk R \in \Dan( K )$ then for any field extension $L/K$ we have
$L \otimes_\bk R \in \Dan( L )$.
In particular, there exists a field extension $\ck / \bk$ such that
$\ck \otimes_\bk R \in \Dan( \ck )$ and such that $\ck$ is an algebraically
closed field. We fix such a field $\ck$.
The fact that $\ck$ is algebraically closed implies that
\begin{equation}\label{FixedField}
\text{the fixed field $\ck^G$ is equal to $\bk$}
\end{equation}
where $G = \Gal( \ck / \bk )$.
We use the notation ($\bar R$, $\Beul$, etc) introduced
in the proof of Lemma~\ref{bvbvbvbryyryyrt}.
As $\ML(R) \neq R$, there exists $0 \neq D \in \lnd(R)$.
Let $\bar D \in \lnd( \bar R)$ be the unique extension of $D$,
let $A = \ker D$ and $\bar A = \ker \bar D$.

It follows from \cite{Dai:LSC} that $\bar A = \ck^{[1]}$
(\cite[2.3]{Dai:LSC} shows that
some element of $\klnd( \bar R )$ is a $\ck^{[1]}$ and,
by \cite[2.7.2]{Dai:LSC},
$\Aut_\ck( \bar R )$ acts transitively on $\klnd( \bar R )$).
Applying the exact functor $\ck \otimes_\bk\,\underline{\ \ }$
to the exact sequence $0 \to A \to R \xrightarrow{D} R$ of $\bk$-linear
maps shows that $\ck \otimes_\bk A = \bar A = \ck^{[1]}$,
so $A = \kk1$.
Choose $f \in R$ such that $A = \bk[f]$, then $\bar A = \ck[f]$.

Consider the nonzero ideals
$I = A \cap D(R)$
and $\bar I = \bar A \cap \bar D(\bar R)$
of $A$ and $\bar A$ respectively.
Let $\psi \in A$ and $s\in R$ be such that $I = \psi A$
and $D(s) = \psi$.
We claim that 
\begin{equation}\label{vcvcvtrtrtrtey}
\bar I = \psi \bar A .
\end{equation}
Indeed, an arbitrary element of $\bar I$ is of the form
$\bar D( \sigma )$ where $\sigma \in \bar R$ and $\bar D^2( \sigma )=0$.
Write $\sigma = \sum_{\lambda \in \Beul} s_\lambda \, \lambda$
with $s_\lambda \in R$,
then  $0 = \bar D^2( \sigma )
= \sum_{\lambda \in \Beul} D^2( s_\lambda ) \, \lambda$,
so  for all $\lambda \in \Beul$ we have $D^2( s_\lambda ) = 0$,
hence $D( s_\lambda ) \in I = \psi A$, and consequently
$\bar D( \sigma ) \in \psi \bar A$, which proves~\eqref{vcvcvtrtrtrtey}.

By \ref{fofpofpofpofpof}(\ref{shhohsohsshhshoohs}),
$\bar D = \alpha \Delta$
for some $\alpha \in \bar A \setminus \{0\}$ and some 
irreducible $\Delta \in \lnd(\bar R)$.
Consider the nonzero ideal $I_0 = \bar A \cap \Delta( \bar R )$ of $\bar A$.
We claim that 
\begin{equation}\label{vwvwvwvwvvwfe}
I_0 = \Delta(s) \bar A .
\end{equation}
To see this, consider an arbitrary element $\Delta( \sigma )$ of $I_0$
(where $\sigma \in \bar R$, $\Delta^2( \sigma ) = 0$).
Then
$ \alpha \Delta( \sigma ) = \bar D( \sigma ) \in \bar I = \psi \bar A
= \bar D(s) \bar A = \alpha \Delta(s) \bar A $,
so $\Delta( \sigma ) \in  \Delta(s) \bar A$ and \eqref{vwvwvwvwvvwfe} is
proved.

Consider the case where $\Delta(s) \in \bar R^*$.
Then $\bar R = \bar A[s] = \ck[ f, s ]$
by \ref{fofpofpofpofpof}(\ref{cvvcccpcpccpcpvcpvv}),
so \eqref{xzsqeasfqwcvsoioi} implies that $R = \bk[f,s] = \kk2$,
so in particular $R \in \Dan(\bk)$ and we are done.

From now-on assume that $\Delta(s) \not\in\bar R^*$.
By \cite[2.8]{Dai:LSC},
$\bar A = \ck[ \Delta(y) ]$ for some $y \in \bar R$.
Note that $\Delta(y) \in I_0$, so \eqref{vwvwvwvwvvwfe} gives
$\Delta(s) \mid \Delta(y)$ in $\bar A$.
As $\Delta(y)$ is an irreducible element of $\bar A$
(because $\ck[ \Delta(y) ] = \bar A = \ck^{[1]}$) and 
$\Delta(s) \not\in\bar A^*$,
we have $\ck[ \Delta(s) ] = \bar A = \ck[f]$ and
consequently $\Delta(s) = \mu(f- \lambda)$ for some $\mu\in\ck^*$,
$\lambda \in \ck$.
We may as well replace $\Delta$ by $\mu^{-1}\Delta$, so 
\begin{equation}\label{hggfdewqqqq}
\Delta(s) = f - \lambda, \quad \textit{for some $\lambda \in \ck$}.
\end{equation}
We claim:
\begin{equation}\label{vcvcvtrttryey}
\setspec{ c \in \ck }
{ \text{$\bar R / (f-c)\bar R$ is not an integral domain} }
= \{ \lambda \} .
\end{equation}
Indeed, \cite[2.8]{Dai:LSC} implies that there exists $x_2 \in \bar R$
such that $( f - \lambda, x_2, s) \in \Gamma_{\ck}( \bar R )$.
This means (cf.\ \ref{danielewski}) that the $\ck$-homomorphism
$\pi : \ck[X_1, X_2, Y] \to \bar R$ defined by
$X_1 \mapsto f-\lambda$,
$X_2 \mapsto x_2$,
$Y \mapsto s$,
is surjective and has kernel $(X_1X_2 - P(Y))$ for some nonconstant
$P(Y) \in \ck[Y]$ (where $X_1, X_2, Y$ are indeterminates).
By \eqref{vwvwvwvwvvwfe} and $\Delta(s) \not\in\bar R^*$, we see
that there does not exist $\sigma \in \bar R$ such that $\Delta(\sigma)=1$;
as $\Delta$ is irreducible,
it follows from \ref{fofpofpofpofpof}(c)
that $\bar R \neq \ck^{[2]}$ and hence that $\deg_YP(Y) > 1$.
Thus, for $c \in \ck$,
$$
\bar R / (f - c) \bar R
\ \isom \ 
\ck[X_1, X_2, Y] / ( X_1 - (c-\lambda),\ X_1X_2 - P(Y) )
$$
is a domain if and only if $c \neq \lambda$.
This proves~\eqref{vcvcvtrttryey}.

Let $\theta \in \Gal( \ck / \bk )$.
Then $\theta$ extends to some $\Theta \in \Aut_R( \bar R )$ and
$\Theta$ determines a ring isomorphism
$$
\bar R / (f - \lambda) \bar R \isom
\bar R / \Theta(f - \lambda) \bar R
= \bar R / (f - \theta(\lambda)) \bar R.
$$
So $\bar R / (f - \theta(\lambda)) \bar R$ is not a domain and
it follows from~\eqref{vcvcvtrttryey} that $\theta(\lambda)=\lambda$.
As this holds for every $\theta \in \Gal( \ck / \bk )$,
\eqref{FixedField} implies that $\lambda \in \bk$.
To summarize, if we define $x_1 = f-\lambda$ then
$$
\textit{
$x_1, s \in R$ and there exists $x_2 \in \bar R$ such that 
$(x_1, x_2, s) \in \Gamma_\ck( \bar R )$.}
$$
We now show that $x_2$ can be chosen in $R$.
Consider the ideals
$J = \bk[s] \cap x_1 R$
of $\bk[s]$ and
$\bar J = \ck[s] \cap x_1 \bar R$
of $\ck[s]$,
and choose $\phi(Y) \in \bk[Y]$ such that $J = \phi(s) \bk[s]$.
Let $\Phi(s)$ be any element of $\bar J$ (where $\Phi(Y) \in \ck[Y]$).
Then $\Phi(s) = x_1 G$ for some $G \in \bar R$.
As $\Beul$ is a basis
of the $R$-module $\bar R$ and also of the $\bk[Y]$-module $\ck[Y]$,
we may write $G = \sum_{ \lambda \in \Beul } G_\lambda \lambda$
(where $G_\lambda \in R$) and
$\Phi = \sum_{ \lambda \in \Beul } \Phi_\lambda \lambda$
(where $\Phi_\lambda  \in \bk[Y]$).
Then
$\sum_{ \lambda \in \Beul } (x_1 G_\lambda) \lambda
= \Phi(s) = 
\sum_{ \lambda \in \Beul } \Phi_\lambda(s) \lambda$,
so for every $\lambda \in \Beul$ we have
$\Phi_\lambda(s) = x_1 G_\lambda$, i.e., 
$\Phi_\lambda(s) \in J = \phi(s) \bk[s]$.
We obtain that $\Phi(s) \in \phi(s) \ck[s]$, so:
$$
\bar J = \phi(s) \ck[s] .
$$
On the other hand, \cite[2.4]{Dai:LSC} asserts that $\bar J = x_1 x_2 \ck[s]$,
so $x_1 x_2 = \mu \phi(s)$ for some $\mu \in \ck^*$.
It is clear that if $(x_1, x_2, s)$ belongs to $\Gamma_\ck( \bar R )$
then so does $(x_1, \mu^{-1} x_2, s)$;
so there exists $x_2 \in \bar R$ such that
$(x_1, x_2, s) \in \Gamma_\ck( \bar R )$
and $x_1 x_2 = \phi(s)$.
As $x_2 = \phi(s)/x_1 \in \Frac R$, \eqref{kdfjwehfui} implies
that $x_2 \in R$. Thus
$$
\textit{$(x_1, x_2, s) \in \Gamma_\ck( \bar R )$, where $x_1, x_2, s \in R$.}
$$
In particular we have $\bar R = \ck[ x_1, x_2, s ]$,
so \eqref{xzsqeasfqwcvsoioi} gives $R = \bk[ x_1, x_2, s ]$.
As $x_1 x_2 = \phi(s)$ where $\phi(Y) \in \bk[Y]$ is nonconstant,
it follows that 
$(x_1, x_2, s) \in \Gamma_\bk( R )$ and hence that $R \in \Dan( \bk )$.
\end{proof}

\section{On a result of Bandman and Makar-Limanov}
\label{Sec:BandML}

In this paper we adopt the following:

\begin{definition}
Let $R$ be an affine algebra over a field $\bk$ and let $q = \dim R$.
We say that $R$ is a \textit{complete intersection over $\bk$}
if $R \isom \bk[X_1, \dots, X_{p+q} ] / ( f_1, \dots, f_p )$
for some $p\ge0$ and some 
$ f_1, \dots, f_p  \in \bk[X_1, \dots, X_{p+q} ]$.
\end{definition}

We refer to \cite[28.D]{Matsumura}
for the definition of a \textit{smooth $\bk$-algebra}
and to \cite[26.C]{Matsumura} for the definition of the
$R$-module $\Omega_{R/\bk}$
(the module of differentials of $R$ over $\bk$), where $R$ is 
a $\bk$-algebra.

\begin{theorem}\label{cxcxcxxexexeexexr}
Let $\bk$ be a field of characteristic zero and $R$ a smooth
affine $\bk$-domain of dimension $2$ such that $\ML(R) = \bk$.
Then the following are equivalent:
\begin{enumerate}

\item[(a)] $R \in \Dan( \bk )$

\item[(b)] $R$ is generated by $3$ elements as a $\bk$-algebra

\item[(c)] $R$ is a complete intersection over $\bk$

\item[(d)] $\bigwedge^2 \Omega_{R/\bk} \isom R$.

\end{enumerate}
\end{theorem}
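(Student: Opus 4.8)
The plan is to prove the cyclic chain of implications (a)~$\Rightarrow$~(b)~$\Rightarrow$~(c)~$\Rightarrow$~(d)~$\Rightarrow$~(a), with essentially all the content in the last step.

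The implications (a)~$\Rightarrow$~(b) and (b)~$\Rightarrow$~(c) are immediate. If $R\isom\bk[X,Y,Z]/(XY-\phi(Z))$ then the images of $X,Y,Z$ generate $R$, which gives (a)~$\Rightarrow$~(b). For (b)~$\Rightarrow$~(c), write $R\isom\bk[X_1,X_2,X_3]/I$; since $R$ is a $2$-dimensional domain, $I$ is a prime ideal of height $1$ in the factorial ring $\bk[X_1,X_2,X_3]$, hence principal, so $R$ is a complete intersection over $\bk$. For (c)~$\Rightarrow$~(d), write $R=S/I$ with $S=\bk[X_1,\dots,X_{p+2}]$ and $I=(f_1,\dots,f_p)$. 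Because $R$ is smooth over $\bk$, the module $\Omega_{R/\bk}$ is projective of rank $2$ and the conormal sequence
$$
0\longrightarrow I/I^2\longrightarrow \Omega_{S/\bk}\otimes_S R\longrightarrow\Omega_{R/\bk}\longrightarrow 0
$$
is exact and splits; here $\Omega_{S/\bk}\otimes_S R\isom R^{p+2}$ is free. Hence $I/I^2$ is a direct summand of $R^{p+2}$, so projective of rank $p$, and being generated by the $p$ classes $f_i+I^2$ it is in fact free of rank $p$. Taking top exterior powers in the split sequence gives
$$
R\isom\bigwedge^{p+2}R^{p+2}\isom\Bigl(\bigwedge^p(I/I^2)\Bigr)\otimes\Bigl(\bigwedge^2\Omega_{R/\bk}\Bigr)\isom\bigwedge^2\Omega_{R/\bk},
$$
which is (d).

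The heart of the matter is (d)~$\Rightarrow$~(a); here I would reduce to an algebraically closed base field and invoke the classification of Bandman and Makar-Limanov. First, observe that $R$ is geometrically integral over $\bk$: being smooth, $R$ is normal, so the algebraic closure of $\bk$ in $\Frac R$ is integral over $\bk$ and contained in $\Frac R$, hence contained in $R$; being a field contained in $R$, it lies in $\ML(R)=\bk$ by \ref{fofpofpofpofpof}(\ref{vcvcvctgfhyegf}); thus $\bk$ is algebraically closed in $\Frac R$ and, in characteristic zero, $\Frac R/\bk$ is a regular extension. Now fix an algebraically closed field $K\supseteq\bk$ and set $\bar R=K\otimes_\bk R$. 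By geometric integrality $\bar R$ is an affine $K$-domain, smooth of dimension $2$; applying the exact functor $K\otimes_\bk(-)$ to the $\bk$-linear map $R\to\prod_{D\in\lnd(R)}R$, $r\mapsto(Dr)_D$, whose kernel is $\ML(R)=\bk$ (and using a $\bk$-basis of $K$ containing $1$ to control the target) shows $\bigcap_{D\in\lnd(R)}\ker(\bar D)=K$, so $\ML(\bar R)=K$; and base change for Kähler differentials gives $\bigwedge^2\Omega_{\bar R/K}\isom\bar R\otimes_R\bigwedge^2\Omega_{R/\bk}\isom\bar R$, so $\bar R$ satisfies~(d) over $K$. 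By \cite{BandML:AffSurfAK} — whose proof uses only algebraic closedness and characteristic zero, so that the statement holds over any such field — a smooth affine $K$-surface with trivial Makar-Limanov invariant satisfying~(d) lies in $\Dan(K)$; hence $\bar R\in\Dan(K)$. Since also $\ML(R)=\bk\neq R$, Theorem~\ref{DanML} yields $R\in\Dan(\bk)$, closing the cycle.

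The one genuine obstacle is matching up the hypotheses of the Bandman--Makar-Limanov theorem: one must check that the condition they impose is, over an algebraically closed field, equivalent to one of (b)--(d) above — so that over $K$ the four conditions collapse to their statement — and that their argument transfers from $\Comp$ to an arbitrary algebraically closed field of characteristic zero (directly, or by the Lefschetz principle). The remaining ingredients — exactness and splitting of the conormal sequence under smoothness, base change for $\Omega_{R/\bk}$ and for $\ML$, and the input from Theorem~\ref{DanML} — are routine.
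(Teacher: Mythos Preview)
Your argument is correct and follows the same global strategy as the paper: the implications (a)$\Rightarrow$(b)$\Rightarrow$(c)$\Rightarrow$(d) are easy (the paper packages (c)$\Rightarrow$(d) as Lemma~\ref{fjejkhfjheudgygygwy}, proved via the Jacobian criterion rather than your conormal-sequence computation, but both are standard), and (d)$\Rightarrow$(a) is obtained by changing the base field, invoking Bandman--Makar-Limanov, and descending via Theorem~\ref{DanML}.

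The one substantive difference is in how the base change is organized. You go \emph{up} from $\bk$ to an algebraic closure $K$, and then need the Bandman--Makar-Limanov result over an arbitrary algebraically closed field of characteristic zero --- precisely the point you flag as ``the one genuine obstacle''. The paper instead first goes \emph{down}: it finds a subfield $\bk_0\subseteq\bk$, finitely generated over $\Rat$, over which $R$, two derivations witnessing $\ML(R)=\bk$, and a free generator of $\bigwedge^2\Omega_{R/\bk}$ are all already defined; it checks (by faithful flatness of $\bk\otimes_{\bk_0}-$) that the resulting $R_0$ again satisfies the hypotheses over $\bk_0$; and only then does it extend scalars, now to $\Comp$, using that $\bk_0$ embeds in $\Comp$. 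In this way \cite{BandML:AffSurfAK} is invoked solely over $\Comp$, exactly as stated there, and no Lefschetz-type transfer is needed. In effect the paper's descent to $\bk_0$ \emph{is} the concrete content of the Lefschetz step you leave as an obstacle, so the two routes converge once that step is spelled out; the paper's version simply avoids having to open the Bandman--Makar-Limanov proof at all.
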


We shall prove this by reducing to the case $\bk=\Comp$,
which was proved by Bandman and Makar-Limanov in \cite{BandML:AffSurfAK}.
That reduction makes essential use of Theorem~\ref{DanML}.

\begin{remark}
Let $\bk$ be a field of characteristic zero.
According to the definition of ``Danielewski surface over $\bk$''
given in \cite{Dub:EmbeddOfDans}, 
one has the following situation:
$$
\setlength{\unitlength}{1mm}
\begin{picture}(65,28)(-5,-5)
\put(25,10){\oval(60,20)}
\put(20,10){\circle{20}}
\put(30,5){\oval(15,21)}
\put(13,13){\makebox(0,0)[br]{\scriptsize $\danml(\bk)$}}
\put(38,11){\makebox(0,0)[bl]{\scriptsize $\Dan(\bk)$}}
\put(51,19){\makebox(0,0)[bl]{\scriptsize $\sml(\bk)$}}
\end{picture}
$$
where $\danml(\bk)$ is the class of Danielewski surfaces $S$ over
$\bk$ satisfying $\ML(S)=\bk$,  $\sml(\bk)$ is the larger class
of smooth affine surfaces $S$ over $\bk$ satisfying $\ML(S)=\bk$,
and $\Dan(\bk)$ is the class of surfaces corresponding to the
already defined class $\Dan(\bk)$ of $\bk$-algebras.
Among other things, paper~\cite{Dub:EmbeddOfDans}
classifies the elements of $\danml(\bk)$ and
characterizes those which belong to $\Dan(\bk)$.
In contrast, Theorem~\ref{cxcxcxxexexeexexr} characterizes the
elements of $\sml(\bk)$ which belong to $\Dan(\bk)$.
\end{remark}

\begin{remark}\label{RemCanSheaf}
Let $R$ be a $q$-dimensional smooth affine domain over a field $\bk$ of
characteristic zero.
Then $X = \Spec R$ is in particular an irreducible regular scheme of
finite type over the perfect field $\bk$;
so, by \cite[ex.~8.1(c), p.\ 187]{Hartshorne}, the sheaf of differentials
$\Omega_{X/\bk}$ is locally free of rank $q$;
so the canonical sheaf $\omega_X = \bigwedge^q \Omega_{X/\bk}$
is locally free of rank $1$, i.e., is an invertible sheaf on $X$.
As $\omega_X$ and the structure sheaf $\Oeul_X$ are respectively
the sheaves associated
to the $R$-modules $\bigwedge^q \Omega_{R/\bk}$ and $R$,
the condition $\bigwedge^q \Omega_{R/\bk} \isom R$ is equivalent
to $\omega_X \isom \Oeul_X$ (one says that $X$ has trivial canonical sheaf).
This is also equivalent to the canonical divisor of $X$ being linearly
equivalent to zero (because $\Pic(X) \isom \Cl(X)$ by
\cite[6.16 p.\ 145]{Hartshorne}).
\end{remark}

\begin{remark}\label{fjefkwhehjwgjsklfjajhh}
Let $A'$ and $B$ be algebras over a ring $A$ and let $B' = A' \otimes_A B$.
Then $\Omega_{B'/A'} \isom B'\otimes_B \Omega_{B/A}$
(cf.\ \cite[p.\ 186]{Matsumura}) and, for any $B$-module $M$,
$\bigwedge^n ( B'\otimes_B M ) \isom 
B'\otimes_B \bigwedge^n M$ for every $n$
(\cite{BourbakiAlgI_III}, Chap.~3, \S\,7, No~5, Prop.~8).
Consequently,
$\bigwedge^n \Omega_{B'/A'} \isom B'\otimes_B \bigwedge^n \Omega_{B/A}$.
\end{remark}

\begin{lemma}\label{fjejkhfjheudgygygwy}
{\it Let $R$ be an algebra over a field $\bk$.
If $R$ is a complete intersection over $\bk$ and a smooth $\bk$-algebra,
then $\bigwedge^q \Omega_{R/\bk} \isom R$ where $q = \dim R$.}
\end{lemma}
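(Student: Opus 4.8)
The plan is to use the fact that a complete intersection presentation gives us a concrete way to compute $\Omega_{R/\bk}$ via the conormal exact sequence, together with the fact that smoothness forces this sequence to be split exact with the right ranks. Write $R \isom \bk[X_1,\dots,X_{p+q}]/I$ with $I = (f_1,\dots,f_p)$, let $P = \bk[X_1,\dots,X_{p+q}]$, and set $n = p+q$. The first step is to invoke the conormal (second fundamental) exact sequence of $\Omega$ for the surjection $P \to R$:
\begin{equation*}
I/I^2 \xrightarrow{\ \delta\ } \Omega_{P/\bk}\otimes_P R \longrightarrow \Omega_{R/\bk} \longrightarrow 0 .
\end{equation*}
Here $\Omega_{P/\bk}\otimes_P R$ is free of rank $n$ over $R$ (with basis the images of $dX_1,\dots,dX_n$), and $I/I^2$ is generated by the images of $f_1,\dots,f_p$, so is a quotient of $R^p$.

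The key input is smoothness: since $R$ is a smooth $\bk$-algebra of dimension $q$ (and $\bk$ is a field, hence $R$ is smooth of relative dimension $q$), $\Omega_{R/\bk}$ is locally free of rank $q$, and moreover the conormal sequence becomes \emph{short} exact and \emph{locally split}:
\begin{equation*}
0 \longrightarrow I/I^2 \xrightarrow{\ \delta\ } \Omega_{P/\bk}\otimes_P R \longrightarrow \Omega_{R/\bk} \longrightarrow 0 .
\end{equation*}
(This is the standard Jacobian criterion / characterization of smoothness via the conormal sequence; see e.g.\ \cite[Thm.~30.4 and Thm.~25.2]{Matsumura}, noting that $R$ being a localization-free quotient here is the complete intersection hypothesis.) Counting ranks of the locally free modules at any prime, $\rank(I/I^2) = n - q = p$. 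Since $I/I^2$ is generated by $p$ elements and is locally free of rank $p$, it is in fact \emph{free} of rank $p$ over $R$: indeed a finitely generated module over a Noetherian ring that is generated by $p$ elements and is locally free of rank $p$ is free (the surjection $R^p \to I/I^2$ has a kernel that is locally zero, hence zero).

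Now take top exterior powers. From a short exact sequence $0 \to M' \to M \to M'' \to 0$ of finitely generated projective modules with $M'$ free of rank $p$ and $M$ free of rank $n$, one gets $\bigwedge^n M \isom \bigwedge^p M' \otimes_R \bigwedge^{q} M''$ (the standard multiplicativity of the determinant along a short exact sequence of projectives). Here $\bigwedge^n M \isom \bigwedge^n(\Omega_{P/\bk}\otimes_P R) \isom R$ (top power of a free module of rank $n$) and $\bigwedge^p M' \isom \bigwedge^p(R^p) \isom R$, so $\bigwedge^q \Omega_{R/\bk} \isom R$ as claimed. The main obstacle — and the point requiring care rather than cleverness — is pinning down precisely why the conormal sequence is short exact and locally split in this setting; this is exactly where both hypotheses (complete intersection and smooth) are used, and I would cite the appropriate statement in \cite{Matsumura} rather than reprove the Jacobian criterion. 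The exterior-power bookkeeping at the end is routine given Remark~\ref{fjefkwhehjwgjsklfjajhh}-style facts about exterior powers, so I would keep it brief.
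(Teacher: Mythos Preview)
Your proof is correct and follows essentially the same route as the paper's: both invoke the conormal/Jacobian presentation $R^p \xrightarrow{\phi} R^{p+q} \to \Omega_{R/\bk} \to 0$, use smoothness (the Jacobian criterion) to control $\phi$, and finish with an exterior-power computation. The paper phrases the smoothness input as ``the $p\times p$ minors of the Jacobian generate the unit ideal'' and then states the resulting $\bigwedge^q M \isom R$ as a bare linear-algebra fact, whereas you unpack this same step via ``$I/I^2$ is free of rank $p$'' plus determinant multiplicativity; these are equivalent packagings of the same argument.
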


This is the well-known fact that a smooth complete intersection
has trivial canonical sheaf, but we don't know a suitable reference
so we sketch a proof.

\begin{proof}[Proof of \ref{fjejkhfjheudgygygwy}]
Let $R = \bk[X_1, \dots, X_{p+q} ] / (f_1, \dots, f_p)$ and
let $\phi_{ij} \in R$ be the image of $\frac{ \partial f_j }{ \partial X_i }$.
Because $R$ is smooth over $\bk$, \cite[29.E]{Matsumura} implies that the
matrix $(\phi_{ij})$ satisfies:
\begin{equation}\label{fgfigigfgfgfig}
\text{the $p \times p$ determinants of $(\phi_{ij})$ generate
the unit ideal of $R$.}
\end{equation}
By \cite[8.4A, p.~173]{Hartshorne}, there is an exact sequence
\mbox{$ R^p \xrightarrow{\ \phi\ } R^{p+q} \to \Omega_{R/\bk} \to 0 $}
of $R$-linear maps where $\phi$ is the map corresponding to the
matrix $(\phi_{ij})$.
Now if $R$ is a ring and 
$R^p \xrightarrow{\ \phi\ } R^{p+q} \to M \to 0$
is an exact sequence of $R$-linear maps such that $\phi$
satisfies \eqref{fgfigigfgfgfig}, then $\bigwedge^q M \isom R$.
\end{proof}

\begin{lemma}\label{jfewjfkjhlshf;ij;}
Let $R$ be an integral domain containing a field $\bk$ of characteristic zero.
If $R$ is normal and  $\ML(R) = \bk$,
then for any field extension $K$ of $\bk$ we have:
\begin{enumerate}

\item[(a)] $K \otimes_\bk R$ is an integral domain

\item[(b)] $\ML( K \otimes_\bk R ) = K$.

\end{enumerate}
\end{lemma}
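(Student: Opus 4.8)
The plan is to split the two claims. For part (a), the key point is that a normal domain which is also algebraic over a base is handled by a standard base-change argument, but here $R$ need not be algebraic over $\bk$, so I would instead use that $R$ is a normal (hence, in particular, geometrically relevant) domain together with the hypothesis $\ML(R)=\bk$. First I would observe that since $\ML(R)=\bk \ne R$, there exists $0 \ne D \in \lnd(R)$ with $A := \ker D = \kk1$ — indeed, by \ref{fofpofpofpofpof}(b) we may take $D$ irreducible, and then one shows $A$ is a one-dimensional affine normal domain that is algebraically closed in $R$ by \ref{fofpofpofpofpof}(a); combined with $R$ being a localization of a polynomial ring over $A$ in the sense that $R$ has a local slice, one gets $A = \bk^{[1]}$. (Alternatively, if the ambient setup of the paper has already established that such an $R$ has $A$ a polynomial ring in one variable, I would just cite it.) Then $K \otimes_\bk R$ is a localization of $(K \otimes_\bk A)[\,\text{slice}\,]$-type object. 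More robustly: $K \otimes_\bk A = K \otimes_\bk \bk^{[1]} = K^{[1]}$ is a domain, and $R$ embeds into $(\Frac A)^{[1]}$-type extension; I would show $K \otimes_\bk R \hookrightarrow K \otimes_\bk (\text{a localization of } A^{[1]})$, which is a localization of $K^{[2]}$, hence a domain.

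For part (b), once (a) is known, set $\bar R = K \otimes_\bk R$, regarded as containing $R$ (flatness of $R$ over $\bk$ gives injectivity of $R \to \bar R$). The derivation $D$ extends uniquely to $\bar D \in \lnd(\bar R)$ with $\ker \bar D = K \otimes_\bk A = K^{[1]}$; this shows $\ML(\bar R) \subseteq \ker \bar D$, but I need the intersection over \emph{all} locally nilpotent derivations of $\bar R$, not just the extended ones. So the real content is: $\ML(\bar R) \subseteq K$. I would argue by a Galois/basis descent exactly as in the proof of Theorem~\ref{DanML}: pick a $\bk$-basis $\Beul$ of $K$ with $1 \in \Beul$, so $\Beul$ is an $R$-basis of $\bar R$; then for any $D' \in \lnd(\bar R)$ and any $\bar a \in \ker D'$, expanding $\bar a = \sum_\lambda a_\lambda \lambda$ and using that $D'$ restricts componentwise is not quite automatic (since $D'$ need not preserve the decomposition), so instead I would use: an element of $\ML(\bar R)$ is factorially closed in $\bar R$ by \ref{fofpofpofpofpof}(a), hence in particular $\ML(\bar R) \cap R$ is factorially closed in $R$ and kills $D$, so $\ML(\bar R) \cap R \subseteq \ML(R) = \bk$. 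To upgrade $\ML(\bar R) \cap R = \bk$ to $\ML(\bar R) = K$ I would show every element of $\ML(\bar R)$ lies in $\Frac R \cdot K$ appropriately — concretely, $\ML(\bar R)$ is a $K$-subalgebra (it contains $K$ since $K$ is a field in $\bar R$, by \ref{fofpofpofpofpof}(a)), and I would reduce to showing $\ML(\bar R)$ is algebraic over $K$, then that it is contained in $K$ using that $\bar A = K^{[1]}$ together with transitivity-type results on $\klnd$; alternatively, pass to an algebraically closed $\bar K \supseteq K$, use $\mathrm{Gal}$-descent to pull $\ML(\bar K \otimes_\bk R)$-information down to $K$.

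The main obstacle I anticipate is part (a): showing $K \otimes_\bk R$ is a domain when $R$ is merely normal (not assumed smooth or a localization of a polynomial ring at this stage). The clean route is through the existence of a nonzero $D \in \lnd(R)$ forcing $\ker D = \bk^{[1]}$ and then realizing $R$ inside a localization of a two-variable polynomial ring over $\bk$; geometric normality then follows because a polynomial ring over a field is geometrically normal (in char $0$, $\bk$ being perfect, $\bk^{[n]}$ is a geometrically regular, hence geometrically integral, $\bk$-algebra), and localization and the subring $R$ inherit this. For part (b), the subtlety is that $\ML$ involves \emph{all} locally nilpotent derivations of the extended ring, so the factorial-closedness argument (every element of $\ML(\bar R)$ is factorially closed, hence integral-closure and unit considerations pin it down) is the efficient tool; I expect the write-up to mirror, in compressed form, the descent machinery already deployed in the proof of Theorem~\ref{DanML}.
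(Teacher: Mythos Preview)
Your approach to part~(a) has a genuine gap: the lemma places no dimension or finiteness hypothesis on $R$, so there is no reason for $\ker D$ to be $\kk1$, and the embedding of $R$ into a localization of a two-variable polynomial ring is unjustified. The paper's argument avoids all of this structure. It simply observes that $\bk=\ML(R)$ is algebraically closed in $R$ by \ref{fofpofpofpofpof}(\ref{vcvcvctgfhyegf}), and that normality of $R$ then forces $\bk$ to be algebraically closed in $L=\Frac R$ (any $\alpha\in L$ algebraic over $\bk$ is integral over $R$, hence in $R$, hence in $\bk$). A classical result (Zariski--Samuel) then gives that $K\otimes_\bk L$ is a domain in characteristic zero, and flatness yields $K\otimes_\bk R\hookrightarrow K\otimes_\bk L$.

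For part~(b) you have the right pieces but are looking at them backwards. You worry that an arbitrary $D'\in\lnd(\bar R)$ need not preserve the $\Beul$-decomposition, and then retreat to factorial-closedness arguments that do not close the gap. But you never need arbitrary $D'$: if $\xi\in\ML(\bar R)$ then in particular $\xi$ lies in the kernel of every \emph{extended} derivation, i.e., for each $D\in\lnd(R)$ take the unique $K$-linear extension $D'\in\lnd(\bar R)$. These extensions are $K$-linear by construction, so they \emph{do} act componentwise on $\xi=\sum_\lambda x_\lambda\lambda$, giving $D(x_\lambda)=0$ for every $\lambda$ and every $D\in\lnd(R)$; hence $x_\lambda\in\ML(R)=\bk$ and $\xi\in K$. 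The reverse inclusion $K\subseteq\ML(\bar R)$ is immediate from \ref{fofpofpofpofpof}(\ref{vcvcvctgfhyegf}).
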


\begin{proof}
As $\bk=\ML(R)$ is algebraically closed in $R$
(\ref{fofpofpofpofpof}(\ref{vcvcvctgfhyegf})) and $R$ is normal,
it follows that $\bk$ is algebraically closed in $L = \Frac R$.
By \cite[Cor.~2, p.~198]{ZarSamI}, $K \otimes_\bk L$ is an integral domain.
As $K$ is flat over $\bk$ and $R \to L$ is injective,
$K \otimes_\bk R \to K \otimes_\bk L$ is injective and (a) is proved.

Let $\xi \in \ML( K \otimes_\bk R )$.
Consider a basis $\Beul$ of $K$ over $\bk$;
note that $\Beul$ is also a basis of the free $R$-module
$R' = K \otimes_\bk R$ and write
$\xi = \sum_{\lambda \in \Beul} x_\lambda \lambda$ (where $x_\lambda \in R$).
If $D \in \lnd(R)$ then $D$ extends to an element $D' \in \lnd( R' )$ and
the equation
$ 0 = D' ( \xi ) =  \sum_{\lambda \in \Beul} D(x_\lambda) \lambda $
shows that $D( x_\lambda ) = 0$ for all $\lambda \in \Beul$.
As this holds for every $D \in \lnd(R)$, we have
$x_\lambda \in \ML( R ) = \bk$ for all $\lambda$, so $\xi \in K$.
\end{proof}

\begin{proof}[Proof of Theorem~\ref{cxcxcxxexexeexexr}]
Implications $\text{(a)} \Rightarrow \text{(b)} \Rightarrow \text{(c)}$
are trivial and $\text{(c)} \Rightarrow \text{(d)}$
is Lemma~\ref{fjejkhfjheudgygygwy},
so only $\text{(d)} \Rightarrow \text{(a)}$ requires a proof.
Assume for a moment that $\bk=\Comp$ and suppose that $R$ satisfies~(d).
Then Lemmas~4 and 5 of \cite{BandML:AffSurfAK} imply that $R\in\Dan(\Comp)$,
so the Theorem is valid in the case $\bk=\Comp$.

Let $\bk$ be a field of characteristic zero,
consider a smooth affine $\bk$-domain $R$ of dimension $2$
such that $\ML(R) = \bk$, and suppose that $R$ satisfies~(d).

We have $R \isom \bk[X_1, \dots, X_n ] / ( f_1, \dots, f_m )$
for some $m,n\ge0$ and some $f_1, \dots, f_m \in \bk[X_1, \dots, X_n ]$.
Also consider $D_1, D_2 \in \lnd(R)$ such that $\ker D_1 \cap \ker D_2 = \bk$.
Each $D_i$ can be lifted to a (not necessarely locally nilpotent)
$\bk$-derivation $\delta_i$ of $\bk[X_1, \dots, X_n ]$.
Let $\bk_0$ be a subfield of $\bk$ which is finitely generated over $\Rat$
and which contains all coefficients of the polynomials
$f_i$ and $\delta_i(X_j)$.
Define $R_0 = \bk_0[X_1, \dots, X_n ] / ( f_1, \dots, f_m )$ and
note that $\bk \otimes_{\bk_0} R_0 \isom R$.
As $\bk_0 \to \bk$ is injective and $R_0$ is flat over $\bk_0$,
$\bk_0 \otimes_{\bk_0} R_0 \to \bk \otimes_{\bk_0} R_0$ is injective
and we may regard $R_0$ as a subring of $R$. In particular, $R_0$ is a domain
(a $2$-dimensional affine $\bk_0$-domain).
Also note that $D_i(R_0) \subseteq R_0$ for $i=1,2$;
if $d_i : R_0 \to R_0$ is the restriction of $D_i$
then $d_1, d_2 \in \lnd( R_0 )$ and
$\ker d_1 \cap \ker d_2 = \bk \cap R_0 = \bk_0$
(see \eqref{kdfjwehfui} for the last equality),
showing that $\ML( R_0 ) = \bk_0$.
As $\bk_0$ is a field and $\bk \to R$ is obtained from 
$\bk_0 \to R_0$ by base extension, the fact that
$\bk \to R$ is smooth implies that $\bk_0 \to R_0$ is smooth
(cf.\ \cite[28.O]{Matsumura}).

Consider the $R$-module $M = \bigwedge^2 \Omega_{R/\bk}$
and the $R_0$-module $M_0 = \bigwedge^2 \Omega_{R_0/\bk_0}$.
Consider an isomorphism of $R$-modules
$\theta : R \to M$ and let $\omega = \theta(1)$.
We have $R \otimes_{ R_0 } M_0 \isom M$ by \ref{fjefkwhehjwgjsklfjajhh},
so there is a natural homomorphism
$M_0 \to R \otimes_{ R_0 } M_0 \isom M$, $x \mapsto 1 \otimes x$;
by adjoining a finite subset of $\bk$ to $\bk_0$,
we may arrange that there exists
$\omega_0 \in M_0$ such that $1 \otimes \omega_0 = \omega$.
Consider the $R_0$-linear map $f : R_0 \to M_0$, $f(a)=a\omega_0$.
Note that $R = \bk \otimes_{\bk_0} R_0$ is faithfully flat as an
$R_0$-module and that applying the functor $R \otimes_{R_0}\underline{\ \ }$
to $f$ yields the isomorphism $\theta$; so $f$ is an isomorphism,
so $\bigwedge^2 \Omega_{R_0/\bk_0} \isom R_0$.
As $R \in \Deul(\bk)$ would follow from $R_0 \in \Deul(\bk_0)$,
the problem reduces to proving the case $\bk = \bk_0$ of the theorem.
Now $\bk_0$ is isomorphic to a subfield of $\Comp$, so 
it suffices to prove the theorem in the case $\bk \subseteq \Comp$.

Assume that $\bk \subseteq \Comp$.
As $R$ is smooth over $\bk$,
the local ring $R_\pgoth$ is regular for every $\pgoth \in \Spec R$
(by \cite[28.E,F,K]{Matsumura}) so in particular $R$ is a normal domain.
Then it follows from \ref{jfewjfkjhlshf;ij;} that
$R' = \Comp \otimes_\bk R$ is an integral domain and that $\ML( R' ) = \Comp$.
By \cite[28.G]{Matsumura}, $R'$ is smooth over $\Comp$.
It is clear that $\dim R' = 2$
(for instance see the proof of \ref{bvbvbvbryyryyrt})
and \ref{fjefkwhehjwgjsklfjajhh} gives
$ \bigwedge^2 \Omega_{ R' / \Comp }
\isom
R' \otimes_{ R } \bigwedge^2 \Omega_{ R / \bk }
\isom
R' \otimes_{ R } R
\isom
R'$.
As the Theorem is valid over $\Comp$,
it follows that $R' \in \Deul( \Comp )$.
As $\ML(R) = \bk \neq R$, Theorem~\ref{DanML}
implies that $R \in \Deul( \bk )$.
\end{proof}

\begin{corollary}\label{NewCorUFD}
Let $R$ be a $2$-dimensional affine domain over a field $\bk$ of
characteristic zero.
If $R$ is a UFD and a smooth $\bk$-algebra satisfying
$\ML(R) = \bk$, then $R \in \Dan( \bk )$.
\end{corollary}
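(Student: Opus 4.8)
The plan is to reduce immediately to Theorem~\ref{cxcxcxxexexeexexr}. The hypotheses imposed on $R$ here --- a smooth affine $\bk$-domain of dimension $2$ with $\ML(R)=\bk$ --- are precisely those of Theorem~\ref{cxcxcxxexexeexexr}, so it suffices to verify one of the equivalent conditions (a)--(d) of that theorem; the most convenient is condition (d), namely $\bigwedge^2 \Omega_{R/\bk} \isom R$. Once (d) is established, the implication $\text{(d)}\Rightarrow\text{(a)}$ of Theorem~\ref{cxcxcxxexexeexexr} yields $R \in \Dan(\bk)$.

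To obtain (d), I would first recall, as in Remark~\ref{RemCanSheaf}, that $X=\Spec R$ is an integral regular scheme of finite type over the perfect field $\bk$, so $\Omega_{X/\bk}$ is locally free of rank $2$ and $\omega_X=\bigwedge^2\Omega_{X/\bk}$ is invertible; equivalently, $\bigwedge^2\Omega_{R/\bk}$ is a rank-one projective $R$-module, determining a class in $\Pic(R)$. Thus it is enough to show $\Pic(R)=0$. This is exactly where the UFD hypothesis enters: since $R$ is a noetherian UFD, every height-one prime of $R$ is principal, so $\Cl(R)=0$; and since $R$ is regular (being smooth over a field), hence locally factorial, one has $\Pic(R)\isom\Cl(R)$ (by \cite[6.16 p.\ 145]{Hartshorne}, the reference already used in Remark~\ref{RemCanSheaf}). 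Hence $\Pic(R)=0$, so every invertible $R$-module is free of rank one; in particular $\bigwedge^2\Omega_{R/\bk}\isom R$, which is condition~(d). Applying Theorem~\ref{cxcxcxxexexeexexr} then finishes the proof.

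I do not expect a genuine obstacle: all the real content sits inside Theorem~\ref{cxcxcxxexexeexexr} (and, through it, Theorem~\ref{DanML} and the Bandman--Makar-Limanov result), while the only new ingredient is the classical fact that a smooth UFD has trivial canonical module, which is purely a matter of assembling the facts about $\Pic$ and $\Cl$ recalled above. If one prefers not to cite $\Pic\isom\Cl$, one can instead argue directly that a rank-one projective module over a UFD is free: it is isomorphic to an invertible fractional ideal of $R$, which after clearing denominators may be taken integral, and an invertible integral ideal of a noetherian domain has pure height one (by Krull's principal ideal theorem applied locally), so in a UFD it is principal; this gives $\bigwedge^2\Omega_{R/\bk}\isom R$ with no appeal to divisor class groups.
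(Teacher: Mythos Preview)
Your proposal is correct and follows essentially the same route as the paper: both reduce to condition~(d) of Theorem~\ref{cxcxcxxexexeexexr} by observing that a UFD has trivial divisor class group, then use the identification $\Pic(R)\isom\Cl(R)$ (already recorded in Remark~\ref{RemCanSheaf}) to conclude that the rank-one projective module $\bigwedge^2\Omega_{R/\bk}$ is free. Your write-up is a bit more explicit about the intermediate step $\Pic(R)=0$, and your alternative direct argument for freeness of rank-one projectives over a UFD is a nice bonus, but there is no substantive difference in strategy.
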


\begin{proof}
Since $R$ is a UFD,
the scheme $X = \Spec R$ has a trivial divisor class group
\cite[6.2 p.\ 131]{Hartshorne}.
By Remark~\ref{RemCanSheaf}, it follows that 
$\bigwedge^2 \Omega_{R/\bk} \isom R$
and the desired conclusion follows from Theorem~\ref{cxcxcxxexexeexexr}.
\end{proof}

\section{Localizations of nice rings}
\label{dkfj;wejij2wlrhwuiehl;wjeioqje;}

Throughout this section we fix a field $\bk$ of characteristic zero
and we consider the class $\Neul(\bk)$ of $\bk$-algebras $B$
satisfying the following conditions:
\begin{quote}\it
$B$ is a geometrically integral affine $\bk$-domain which is
smooth over $\bk$ and satisfies at least one of the following conditions:
\begin{itemize}
\item $B$ is a UFD; or
\item $B$ is a complete intersection over $\bk$.
\end{itemize}
\end{quote}
Note that $\kk n \in \Neul(\bk)$ for every $n$.

\begin{theorem}\label{dwddwdwdwddwd}
Suppose that $R$ is a localization of a ring belonging to the
class $\Neul(\bk)$.
If $\ML(R) = K$ for some field
$K \subset R$ such that $\trdeg_KR=2$, then $R \in \Dan( K )$.
\end{theorem}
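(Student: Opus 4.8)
The plan is to reduce Theorem~\ref{dwddwdwdwddwd} to Theorem~\ref{cxcxcxxexexeexexr} (the smooth case) and Theorem~\ref{DanML} (base extension), by analyzing what localizing does to the relevant invariants. Write $R = S^{-1}B$ where $B \in \Neul(\bk)$ and $S \subseteq B \setminus \{0\}$ is a multiplicative set. The first step is to pin down $K$: since $K = \ML(R)$, part~(\ref{vcvcvctgfhyegf}) of~\ref{fofpofpofpofpof} tells us that $K$ is factorially closed, hence algebraically closed, in $R$, so any field inside $R$ lies in $K$; in particular $\bk \subseteq K$, and $K$ is algebraic over... no — rather, $K$ is a field of transcendence degree $\dim B - 2$ over $\bk$ (since $\dim R = \dim B$ as localization preserves dimension for these rings, and $\trdeg_K R = 2$). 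So $\trdeg_\bk K = \dim B - 2$. The key reduction idea: replace the ground field $\bk$ by $K$. Concretely, I would show $R$ is a $2$-dimensional affine \emph{$K$-domain}, smooth over $K$, with $\ML(R) = K$, and such that $\bigwedge^2 \Omega_{R/K} \isom R$; then Theorem~\ref{cxcxcxxexexeexexr} (applied with ground field $K$) gives $R \in \Dan(K)$ directly.

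\textbf{Carrying out the reduction.} First, $R$ is affine over $K$: one argues that $R$ is a localization of $B$, and $B \cdot K$ (the $K$-subalgebra of $R$ generated by $B$) is a finitely generated $K$-algebra whose fraction field equals $\Frac R$; combined with $K$ being algebraically closed in $R$ and $\dim = 2$, a Noether-normalization-type argument (as in the proof of Lemma~\ref{bvbvbvbryyryyrt}) shows $R$ is affine over $K$ of dimension $2$. Smoothness of $R$ over $K$: $B$ is smooth over $\bk$, hence $R = S^{-1}B$ is smooth over $\bk$ (localization preserves smoothness, \cite[28.K]{Matsumura} or \cite[28.E]{Matsumura}), so $R_\pgoth$ is regular for all $\pgoth$; since $K \subseteq R$ and $\Char = 0$, $K$ is a perfect field, and a regular affine algebra over a perfect field whose residue extensions are separable is smooth — here one needs that $R$ is smooth over $K$, which follows because smoothness over $\bk$ plus $K$ being algebraic... no, $K$ is transcendental over $\bk$. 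Instead: $R$ is geometrically regular over $K$ iff $R_\pgoth$ is regular and the formal fibers are geometrically regular; in characteristic zero, regular + essentially of finite type over a field of characteristic zero is automatically smooth over that field (using \cite[28.N, 28.O]{Matsumura}). So $R$ smooth over $K$ follows from $R$ being regular and finite type over $K$. The last point, $\bigwedge^2 \Omega_{R/K} \isom R$: here I use that $B$ is a UFD or complete intersection over $\bk$. If $B$ is a UFD then $S^{-1}B$ is a UFD, so $\Cl(\Spec R) = 0$, so by Remark~\ref{RemCanSheaf} (applied over $K$, using that $R$ is smooth over $K$) we get $\omega_{\Spec R} \isom \Oeul$, i.e., $\bigwedge^2 \Omega_{R/K} \isom R$. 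If $B$ is a complete intersection over $\bk$ and smooth, then by Lemma~\ref{fjejkhfjheudgygygwy} $\bigwedge^{\dim B} \Omega_{B/\bk} \isom B$, hence $\bigwedge^{\dim B} \Omega_{S^{-1}B / \bk} \isom S^{-1}B = R$ (localization commutes with $\Omega$ and with exterior powers). I then need to pass from $\Omega_{R/\bk}$ to $\Omega_{R/K}$: there is the conormal/cotangent exact sequence $R \otimes_K \Omega_{K/\bk} \to \Omega_{R/\bk} \to \Omega_{R/K} \to 0$, and since $\bk \subseteq K \subseteq R$ with $R$ smooth over both $\bk$ and $K$, this sequence is left-exact and split, with $\Omega_{K/\bk}$ free of rank $\trdeg_\bk K = \dim B - 2$ (as $K/\bk$ is a separable, finitely generated field extension — finitely generated because $K \subseteq R$ and $R$ is affine, hence Noetherian; separable because $\Char = 0$). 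Taking determinants, $\bigwedge^{\dim B}\Omega_{R/\bk} \isom \bigl(\bigwedge^{\dim B - 2}(R \otimes_K \Omega_{K/\bk})\bigr) \otimes_R \bigl(\bigwedge^2 \Omega_{R/K}\bigr) \isom R \otimes_R \bigwedge^2 \Omega_{R/K} \isom \bigwedge^2\Omega_{R/K}$. Since the left side is $\isom R$, we conclude $\bigwedge^2 \Omega_{R/K} \isom R$, as needed.

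\textbf{Finishing.} Having established that $R$ is a smooth affine $K$-domain of dimension $2$ with $\ML(R) = K$ and $\bigwedge^2 \Omega_{R/K} \isom R$, Theorem~\ref{cxcxcxxexexeexexr} (with ground field $K$ in place of $\bk$) yields immediately $R \in \Dan(K)$, which is the claim. Note this route does not actually need Theorem~\ref{DanML}; it is a direct appeal to Theorem~\ref{cxcxcxxexexeexexr}. (An alternative, in case the smoothness of $R$ over $K$ is delicate to establish directly, would be to work over $\bk$: show $\bk \otimes$ by an algebraic closure $\ck$ of... but that does not immediately produce a Danielewski structure over $K$ since $K$ itself is the invariant, so the direct route above is cleaner.)

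\textbf{The main obstacle} I anticipate is the careful verification that $R$ is smooth \emph{over $K$} (not just over $\bk$) and affine over $K$ — this is where the hypothesis $\ML(R) = K$ being a field of the right transcendence degree, together with the characteristic-zero criteria relating regularity and smoothness (\cite[28.N, 28.O]{Matsumura}) and the fact that $K$ is algebraically closed in $R$, all have to be combined correctly. The second delicate point is the determinant computation relating $\bigwedge^{\dim B}\Omega_{R/\bk}$ and $\bigwedge^2\Omega_{R/K}$ via the cotangent sequence, which requires knowing the sequence is split exact on the left (true because $R$ is smooth over $K$ and $K$ is smooth over $\bk$) and that $\Omega_{K/\bk}$ is free of the correct rank. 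The rest — localization commuting with $\Omega$, with exterior powers, and preserving the UFD/complete-intersection/dimension/smoothness-over-$\bk$ properties — is routine.
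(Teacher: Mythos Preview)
Your overall strategy matches the paper's: reduce to Theorem~\ref{cxcxcxxexexeexexr} by verifying that $R$ is a smooth affine $K$-domain of dimension $2$ with $\bigwedge^2\Omega_{R/K}\isom R$. The paper splits into the same UFD/complete-intersection cases and also ends by invoking Theorem~\ref{cxcxcxxexexeexexr} (directly or via Corollary~\ref{NewCorUFD}).

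Where you differ technically:
\begin{itemize}
\item \textbf{Smoothness over $K$.} The paper proves this via Lemmas~\ref{nbnbnbniuiuioypw}--\ref{fwkefkjnfkja;kl}, using the ``geometrically integral'' hypothesis and descent from the algebraic closure. Your route (regular $+$ finite type over a field of characteristic zero $\Rightarrow$ smooth) is more direct and does not need geometric integrality.
\item \textbf{Complete intersection case.} The paper introduces an intermediate purely transcendental subfield $K_0=\bk(f_1,\dots,f_{q-2})\subseteq K$, exhibits $R$ as a complete intersection over $K_0$, obtains $\bigwedge^2\Omega_{R/K_0}\isom R$ from Lemma~\ref{fjejkhfjheudgygygwy}, and then uses $\Omega_{K/K_0}=0$ to pass to $\Omega_{R/K}$. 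You instead take the determinant of the cotangent sequence for $\bk\subseteq K\subseteq R$. Both are valid; yours is shorter but relies on the split-exactness (from smoothness of $R/K$) and on $\Omega_{K/\bk}$ being free of rank $\trdeg_\bk K$, which you correctly justify.
\end{itemize}

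Two repairable slips. First, ``$\dim R=\dim B$ as localization preserves dimension'' is false (localize $\kk2$ at everything nonzero); what you need is $\trdeg_\bk\Frac R=\trdeg_\bk\Frac B=\dim B$, hence $\trdeg_\bk K=\dim B-2$, and then $\dim R=\trdeg_K\Frac R=2$ \emph{once you know $R$ is affine over $K$}. Second, your affineness argument is the weak point: knowing that $K[B]$ has the right fraction field, plus ``Noether normalization'', does not show $R=K[B]$. The fix is immediate and is exactly what the paper uses: since $S\subseteq R^*\subseteq\ML(R)=K$, every $s^{-1}$ already lies in $K$, so $R=S^{-1}B\subseteq K[B]\subseteq R$, i.e.\ $R=K[B]$ is finitely generated over $K$. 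With that in place your smoothness and determinant arguments go through.
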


\begin{lemma}\label{nbnbnbniuiuioypw}
Let $B \in \Neul( \bk )$,
let $E$ be a finitely generated $\bk$-subalbebra of $B$ and
let $S = E \setminus\{0\}$.
Then $S^{-1}B$ is a smooth algebra over the field $S^{-1}E$.
\end{lemma}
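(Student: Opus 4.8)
The statement to prove is Lemma~\ref{nbnbnbniuiuioypw}: if $B \in \Neul(\bk)$, $E \subseteq B$ is a finitely generated $\bk$-subalgebra, and $S = E \setminus \{0\}$, then $S^{-1}B$ is smooth over the field $S^{-1}E$. The natural approach is to use the standard characterization of smoothness in terms of the Jacobian criterion (as in \cite[28.E]{Matsumura} or \cite[29.E]{Matsumura}), together with the behaviour of $\Omega$ and regularity under localization. Since $B$ is smooth over $\bk$, I want to transfer this smoothness to the statement that $S^{-1}B$ is smooth over $S^{-1}E$; note that $S^{-1}E = \Frac(E)$ is a field (as $E$ is a domain, being a subring of the domain $B$), so ``smooth over $S^{-1}E$'' just means ``geometrically regular over $S^{-1}E$,'' which by \cite[28.N, 28.O]{Matsumura} (or the fact that a finite-type algebra over a field of characteristic zero, hence a perfect field, is smooth iff it is regular) reduces to checking that $S^{-1}B$ is a regular ring.

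\smallskip
First I would observe that $S^{-1}B$ is a localization of $B$, hence Noetherian, and that every local ring of $S^{-1}B$ is a local ring of $B$ — and those are regular because $B$ is smooth over $\bk$ (\cite[28.E,F,K]{Matsumura}, exactly as invoked in the proof of Theorem~\ref{cxcxcxxexexeexexr}). So $S^{-1}B$ is a regular ring. Second, I need $S^{-1}B$ to be of finite type over $S^{-1}E$: write $B = \bk[b_1,\dots,b_r]$; then $S^{-1}B$ is generated over $S^{-1}E = \Frac(E)$ by $b_1,\dots,b_r$, so it is an affine algebra over the field $\Frac(E)$. Third, since $\Frac(E)$ has characteristic zero, hence is perfect, a finite-type algebra over it is smooth iff it is regular (\cite[28.N, 28.O, 29.E]{Matsumura}); combining with the first two points gives that $S^{-1}B$ is smooth over $S^{-1}E$.

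\smallskip
The one subtlety I expect to be the real content is verifying that $\dim$ and relative dimension are handled correctly — i.e.\ that ``smooth'' is used consistently (some authors require pure relative dimension, others don't). Since Matsumura's notion of ``smooth $\bk$-algebra'' \cite[28.D]{Matsumura} is local and amounts to formal smoothness / geometric regularity, localization preserves it automatically: formal smoothness is preserved under localization of the source (\cite[28.O]{Matsumura}), and a base change from $\bk$ to $S^{-1}E$ along a field map does not destroy geometric regularity of the local rings involved since those local rings and their residue fields are unchanged. So in fact the cleanest route is to cite \cite[28.O]{Matsumura} for ``smoothness is preserved under localization of the ring'' and then note this is precisely a localization: $S^{-1}B$ over $S^{-1}E$ is obtained from $B$ over $\bk$ by inverting the elements of $S$ in both, and since the $\bk \hookrightarrow S^{-1}E$ is a localization of $\bk$-algebras in the trivial sense ($S^{-1}E$ being a localization of $E \subseteq B$), the smoothness passes down. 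I would phrase the final proof as: $S^{-1}E$ is a field; $S^{-1}B$ is a finite-type $S^{-1}E$-algebra; each of its local rings is regular (being a local ring of the smooth $\bk$-algebra $B$); hence, over the perfect field $S^{-1}E$, it is smooth.

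\smallskip
The anticipated obstacle is purely bookkeeping: making sure the chain $\bk \subseteq E \subseteq B$, $S = E \setminus\{0\}$, $S^{-1}E = \Frac(E)$ interacts correctly with the finite-generation hypotheses (one needs $E$ finitely generated only to know $S^{-1}E$ is a field of finite transcendence degree over $\bk$, which is not even needed for the conclusion — a field is a field). I do not foresee any genuine difficulty beyond citing \cite[28.E--28.O]{Matsumura} in the right order.
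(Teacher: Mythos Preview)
Your argument is correct and takes a genuinely different, more direct route than the paper. The paper passes to an algebraic closure $\ck$ of $\bk$, uses that $\bar B = \ck \otimes_\bk B$ is an integral domain (this is where the ``geometrically integral'' hypothesis in the definition of $\Neul(\bk)$ enters), applies generic smoothness \cite[10.7, p.~272]{Hartshorne} to the morphism $\Spec\bar B \to \Spec\bar E$ to conclude that $L \otimes_{\bar E}\bar B$ is smooth over $L = \Frac\bar E$, and then descends via \cite[28.O]{Matsumura} (smoothness of a base extension over a field implies smoothness of the original map) to get that $K\otimes_E B$ is smooth over $K=\Frac E$. By contrast, you observe that $S^{-1}B$ is a localization of the regular ring $B$ (regular because smooth over a field, \cite[28.E,F,K]{Matsumura}), hence itself regular; that it is of finite type over the characteristic-zero field $\Frac(E)$; and that a finite-type algebra over a perfect field is smooth if and only if it is regular. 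Your approach is shorter, avoids generic smoothness entirely, and in fact never uses the geometric integrality of $B$. The paper's route, on the other hand, makes the base-change structure explicit and ties in naturally with the descent statement \cite[28.O]{Matsumura} already used elsewhere in Section~\ref{Sec:BandML}.
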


\begin{proof}
Let $\ck$ be an algebraic closure of $\bk$
and define $\bar E = \ck \otimes_\bk E$
and $\bar B = \ck \otimes_\bk B$.
Note that $\bar B$ is a domain because $B$ is geometrically integral,
and $\bar E \to \bar B$ is injective because $\ck$ is flat over $\bk$.
Let $K = \Frac E$ and $L = \Frac \bar E$.
As $\bar B$ is smooth over $\ck$,
applying \cite[10.7, p.~272]{Hartshorne} to $\Spec\bar B \to \Spec \bar E$
implies that $L \to L \otimes_{\bar E} \bar B$ is smooth.
It is not difficult to see
that $L \to L \otimes_{\bar E} \bar B$ 
is obtained from  $K \to K \otimes_{E} B$ by base extension.
As $K$ is a field and $L \to L \otimes_{\bar E} \bar B$ is smooth,
it follows from \cite[28.O]{Matsumura} that $K \to K \otimes_{E} B$ is smooth.
\end{proof}

\begin{lemma}\label{fwkefkjnfkja;kl}
Let $B \in \Neul( \bk )$, let $S$ be a multiplicative subset of  $B$
and suppose that $K$ is a field such that
$\bk \cup S \subseteq K \subseteq S^{-1}B$.
Then $S^{-1}B$ is a smooth $K$-algebra and some
transcendence basis of $K/\bk$ is a subset of $B$.
\end{lemma}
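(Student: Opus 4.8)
The plan is to reduce to Lemma~\ref{nbnbnbniuiuioypw} by showing that the localization $S^{-1}B$ can be rewritten as a localization of $B$ with respect to (the saturation of) a \emph{finitely generated} $\bk$-subalgebra of $B$. First I would dispose of the easy part: since $K \subseteq S^{-1}B$ and $B$ is affine over $\bk$, pick finitely many generators $b_1/s_1, \dots, b_r/s_r$ of $K$ as a field over $\bk$ (here I use that $K$ is finitely generated over $\bk$, which follows from $K \subseteq S^{-1}B$ and $S^{-1}B$ being of finite type over $\bk$, hence $K$ is a subfield of a finitely generated field extension of $\bk$, hence itself finitely generated over $\bk$). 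A transcendence basis of $K/\bk$ can then be extracted from $\{b_1/s_1, \dots, b_r/s_r, \dots\}$; but I want it inside $B$. Note $\Frac B$ has transcendence degree $\dim B$ over $\bk$ and $K \subseteq \Frac B$, so any transcendence basis of $K/\bk$ is part of a transcendence basis of $\Frac B/\bk$; since $B$ generates $\Frac B$, I can choose a transcendence basis of $K/\bk$ consisting of elements of $B$ (replace each $b_i/s_i$ in a chosen transcendence basis by a suitable element of $B$ algebraic over $\bk(\text{the others})$ together with $b_i/s_i$ — more directly, algebraic independence over $\bk$ of elements of $\Frac B$ can always be witnessed by elements of $B$ after clearing denominators, because algebraic dependence is a polynomial relation). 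This gives the second assertion.

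For the smoothness claim, let $E$ be the $\bk$-subalgebra of $B$ generated by $b_1, \dots, b_r, s_1, \dots, s_r$ (so $E$ is finitely generated over $\bk$ and $E \subseteq B$), and let $S_0 = E \setminus \{0\}$. By construction $K \subseteq S_0^{-1}E = \Frac E \subseteq S_0^{-1}B$, and in fact $\Frac E$ is an algebraic extension of $K$ of finite degree — indeed $K$ contains $b_i/s_i$ for all $i$, hence $K(s_i) \ni b_i$, and one checks $\Frac E = K(s_1, \dots, s_r)$ is finite over $K$ because each $s_i$ is algebraic over $K$ (as $s_i \in S \subseteq K$ already, wait: $S \subseteq K$ is given, so in fact $s_i \in K$, hence $b_i = s_i \cdot (b_i/s_i) \in K$, so $\Frac E \subseteq K$ and therefore $\Frac E = K$). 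This simplification is the key observation: the hypothesis $S \subseteq K$ forces $\Frac E = K$ exactly, not merely an algebraic extension. Now Lemma~\ref{nbnbnbniuiuioypw} applied to $B \in \Neul(\bk)$ and this $E$ yields that $S_0^{-1}B$ is smooth over $S_0^{-1}E = K$.

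It remains to pass from $S_0^{-1}B$ to $S^{-1}B$. Since $S \subseteq K = \Frac E \subseteq S_0^{-1}B$ and each element of $S$ is a unit in $S_0^{-1}B$ (being a nonzero element of the field $K$), the universal property of localization gives a canonical $B$-algebra map $S^{-1}B \to S_0^{-1}B$; conversely every element of $S_0 = E\setminus\{0\}$ maps to a unit in $S^{-1}B$? — not obviously, since $E \not\subseteq S$ in general. Instead I argue directly: $S^{-1}B$ is a $K$-algebra (as $K \subseteq S^{-1}B$), and $S^{-1}B = K \cdot B$ inside $\Frac B$; but $K \cdot B \subseteq S_0^{-1}B$ and $S_0^{-1}B = K \cdot B$ too (every element of $E$ nonzero is already invertible once we invert $K$-multiples, since $E \setminus\{0\} \subseteq K$). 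Hence $S^{-1}B = S_0^{-1}B$ as subrings of $\Frac B$, so $S^{-1}B$ is smooth over $K$. The main obstacle is bookkeeping the chain of inclusions $\bk \cup S \subseteq K \subseteq S^{-1}B$ carefully enough to see that $E \setminus\{0\} \subseteq K$ and hence that the two localizations coincide; once that identity is in hand, everything follows from Lemma~\ref{nbnbnbniuiuioypw} and elementary transcendence-degree considerations.
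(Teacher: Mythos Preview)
Your approach is essentially the same as the paper's: build the finitely generated $\bk$-subalgebra $E=\bk[b_1,\dots,b_r,s_1,\dots,s_r]\subseteq B$, observe (using $S\subseteq K$) that $E\subseteq K$ and $\Frac E=K$, then apply Lemma~\ref{nbnbnbniuiuioypw} and check that $S^{-1}B=S_0^{-1}B$. That part is correct and matches the paper almost line for line; your verification that the two localizations coincide (both equal $K\cdot B$ inside $\Frac B$) is just an unpacking of the paper's terse ``hence $S_1^{-1}B=S^{-1}B$''.

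The one place to clean up is your first paragraph on the transcendence basis. The sentence ``algebraic independence over $\bk$ of elements of $\Frac B$ can always be witnessed by elements of $B$ after clearing denominators'' is not a valid argument here: a transcendence basis of $K/\bk$ must by definition lie in $K$, not merely in $B$, so producing algebraically independent elements of $B$ is not enough unless you know they lie in $K$. You never use $S\subseteq K$ in that paragraph, and without it the claim can fail. The good news is that your second paragraph already contains the correct proof: once you have $E\subseteq K\cap B$ and $\Frac E=K$, the finite generating set $\{b_1,\dots,b_r,s_1,\dots,s_r\}\subseteq B$ of $E$ contains a transcendence basis of $K/\bk$. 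This is exactly how the paper handles it (in one sentence at the end). So drop the first-paragraph detour and simply note the transcendence-basis conclusion after establishing $\Frac E=K$.
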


\begin{proof}
Note that $K/\bk$ is a finitely generated field extension and write 
$K = \bk( \alpha_1, \dots, \alpha_m )$.
For each $i$ we have $\alpha_i = b_i/s_i$ for some $b_i \in B$ and 
$s_i \in S$; as $S \subseteq K$, we have $b_i = s_i \alpha_i \in K$.
Define $E = \bk[b_1, \dots, b_m, s_1, \dots, s_m] \subseteq K$ and
$S_1 = E \setminus \{0\}$, then
$S_1^{-1} E = K$ and hence $S_1^{-1} B = S^{-1} B$.
By Lemma~\ref{nbnbnbniuiuioypw}, $S^{-1} B$ is a smooth $K$-algebra.
Moreover, $\{ b_1, \dots, b_m, s_1, \dots, s_m \}$
contains a transcendence basis of $K/\bk$.
\end{proof}

\begin{proof}[Proof of Theorem~\ref{dwddwdwdwddwd}]
We have $R = S^{-1}B$ for some $B \in \Neul( \bk )$ and some multiplicative
subset $S$ of $B$.
As $\bk^* \cup S \subseteq R^* \subseteq \ML(R) = K$,
$R$ is smooth over $K$ by Lemma~\ref{fwkefkjnfkja;kl}.
By definition of $\Neul( \bk )$, $B$ is a UFD or a complete intersection
over $\bk$.

If $B$ is a UFD then so is $R$;
in this case we obtain $R \in \Dan( K )$ by Corollary~\ref{NewCorUFD},
so we are done.

From now-on, assume that $B$ is a complete intersection over $\bk$.
Let $q=\dim B$ and write $B = \bk[X_1, \dots, X_{p+q}] / (G_1, \dots, G_p)$.
Using Lemma~\ref{fwkefkjnfkja;kl} again,
choose a transcendence basis $\{ f_1, \dots, f_{q-2} \}$ of $K$ over
$\bk$ such that $f_1, \dots, f_{q-2} \in B$;
let $S_0 = \bk[ f_1, \dots, f_{q-2} ] \setminus \{ 0 \}$ and
$K_0 = \bk( f_1, \dots, f_{q-2} )$.
We claim:
\begin{equation}\label{fjlwehfhwlehj;wkej;qilui}
\text{$S_0^{-1}B$ is a complete intersection over $K_0$.}
\end{equation}
Let us prove this.
For $1 \le i \le q-2$, choose $F_i \in \bk[X_1, \dots, X_{p+q}]$
such that $\pi( F_i ) = f_i$ where
$\pi : \bk[X_1, \dots, X_{p+q}] \to B$ is the canonical epimorphism.
Also, let $T_1, \dots, T_{q-2}$ be extra indeterminates.
The $\bk$-homomorphism
$\bk[T_1, \dots, T_{q-2}, X_1, \dots, X_{p+q} ] \to B$
which maps $T_i$ to $f_i$ and $X_i$ to $\pi(X_i)$ has kernel
$(G_1, \dots, G_p, F_1-T_1, \dots, F_{q-2} - T_{q-2} )$,
so there is an isomorphism of $\bk$-algebras
$$
B \isom \bk[T_1, \dots, T_{q-2}, X_1, \dots, X_{p+q} ]
/ (G_1, \dots, G_p, F_1-T_1, \dots, F_{q-2} - T_{q-2} ) .
$$
Localization gives an an isomorphism of $\bk$-algebras
\begin{equation}\label{kfjwjehfiwuelj;;weklj}
S_0^{-1}B \isom
\bk(T_1, \dots, T_{q-2}) [ X_1, \dots, X_{p+q} ]
/ (G_1, \dots, G_p, F_1-T_1, \dots, F_{q-2} - T_{q-2} )
\end{equation}
which maps $K_0$ onto $\bk(T_1, \dots, T_{q-2})$.
As the right hand side of \eqref{kfjwjehfiwuelj;;weklj} is 
a complete intersection over $\bk(T_1, \dots, T_{q-2})$,
assertion \eqref{fjlwehfhwlehj;wkej;qilui} is proved.
Then we obtain
\begin{equation}\label{bvbbvbvytytytyru}
\textstyle
\bigwedge^2 \Omega_{S_0^{-1}B/K_0} \isom S_0^{-1}B
\end{equation}
by Lemma~\ref{fjejkhfjheudgygygwy},
because $S_0^{-1}B$ is a smooth $K_0$-algebra by Lemma~\ref{nbnbnbniuiuioypw}.

Each element of $K$ belongs to $\Frac( S_0^{-1}B )$ and is
algebraic over $K_0$, hence integral over $S_0^{-1}B$;
as $S_0^{-1}B$ is normal, $K \subseteq S_0^{-1}B$ and hence $S_0^{-1}B = R$.
We may therefore rewrite \eqref{bvbbvbvytytytyru} as:
\begin{equation}\label{fkefjjkhhjghgffsdeq}
\textstyle
\bigwedge^2 \Omega_{R/K_0} \isom R .
\end{equation}
Applying \cite[26.H]{Matsumura}
to $K_0 \subseteq K \subseteq R$ gives the exact sequence of $R$-modules
$$
\Omega_{K/K_0} \otimes_K R \to \Omega_{R/K_0} \to  \Omega_{R/K} \to 0,
$$
where $\Omega_{K/K_0} = 0$ by \cite[27.B]{Matsumura}.
So $\Omega_{R/K} \isom \Omega_{R/K_0}$ and hence
\eqref{fkefjjkhhjghgffsdeq} gives $\bigwedge^2 \Omega_{R/K} \isom R$.
So $R \in \Dan( K )$ by Theorem~\ref{cxcxcxxexexeexexr}.
\end{proof}


Let $\bk$ be a field of characteristic zero,
let $B \in \Neul( \bk )$ and consider locally nilpotent derivations
$D: B \to B$.
See \ref{fefefefeer} for the definition of $\klnd(B)$.
It is known that if $A \in \klnd(B)$ then $\trdeg_A(B)=1$,
and if $A_1, A_2$ are distinct elements of $\klnd(B)$
then $\trdeg_{ A_1 \cap A_2 }(B) \ge 2$.
We are interested in the situation where 
$\trdeg_{ A_1 \cap A_2 }(B) = 2$, i.e., when 
$A_1, A_2$ are distinct and have an intersetion which is as large as 
possible.

\begin{corollary}\label{ofoofofoofofoo}
Let $B \in \Neul( \bk )$, where $\bk$ is a field of characteristic zero.
If $A_1, A_2 \in \klnd(B)$ are such that
$\trdeg_{ A_1 \cap A_2 }(B) = 2$, then the following hold.
\begin{enumerate}

\item[(a)] Let $R = A_1 \cap A_2$ and $K = \Frac R$.
Then $K \otimes_R B \in \Dan( K )$.

\item[(b)] If $B$ is a UFD then there exists a finite sequence
of local slice constructions which transforms $A_1$ into $A_2$.

\end{enumerate}
\end{corollary}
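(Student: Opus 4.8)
The plan is to deduce both parts from Theorem~\ref{dwddwdwdwddwd} and (for part (b)) from the machinery of local slice constructions developed in \cite{Dai:LSC}. First I would establish that $R = A_1 \cap A_2$ is itself a field-localization of $B$ in a suitable sense, or at least that $K \otimes_R B$ is a localization of a ring in $\Neul(\bk)$; the natural move is to pass to $\Frac(B)$ and observe that $K \otimes_R B$ is the subring of $\Frac(B)$ generated by $K$ and $B$, which is a localization $T^{-1}B'$ where $B' = K_0 \otimes_{\bk} B$ for $K_0$ a finitely generated subfield of $K$ containing enough generators. Base extension preserves membership in $\Neul(\bk)$ up to enlarging the base field (this is where geometric integrality and smoothness of $B$ are used), so $K \otimes_R B$ is a localization of a ring in $\Neul(K_0)$ for a suitable $K_0$.

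For part (a): the key point is that $\ML(K \otimes_R B) = K$. One inclusion is clear since $K \subseteq K \otimes_R B$ is fixed by every derivation over $K$. For the reverse, I would use that $A_1, A_2 \in \klnd(B)$ extend to locally nilpotent derivations $\bar D_1, \bar D_2$ on $K \otimes_R B$ with kernels $K \otimes_R A_1$ and $K \otimes_R A_2$ (via the exact-functor argument as in the proof of Theorem~\ref{DanML} and Lemma~\ref{jfewjfkjhlshf;ij;}); then $\ML(K \otimes_R B) \subseteq (K \otimes_R A_1) \cap (K \otimes_R A_2)$, and a basis argument over $K$ (exactly as in the proof of Lemma~\ref{jfewjfkjhlshf;ij;}(b)) shows this intersection equals $K \otimes_R (A_1 \cap A_2) = K \otimes_R R = K$. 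Since $\trdeg_K(K \otimes_R B) = \trdeg_R B = 2$ and $K \otimes_R B$ is a localization of a ring in $\Neul(K_0)$, Theorem~\ref{dwddwdwdwddwd} gives $K \otimes_R B \in \Dan(K)$.

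For part (b), when $B$ is a UFD: the idea is that, after inverting the nonzero elements of $R$, the two kernels $A_1, A_2$ become kernels of locally nilpotent derivations on the Danielewski surface $K \otimes_R B \in \Dan(K)$; by \cite[2.7.2]{Dai:LSC}, $\Aut_K(K \otimes_R B)$ acts transitively on $\klnd(K \otimes_R B)$, and the structure theory there expresses the passage between any two elements of $\klnd$ as a composite of elementary moves. One then has to descend these moves back to $B$ itself and recognize each as a local slice construction in the sense of \cite{Dai:LSC}; here the UFD hypothesis on $B$ is essential so that irreducible locally nilpotent derivations have principal-ideal images and the local slice construction is well-defined integrally rather than only after localization. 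I expect the main obstacle to be exactly this descent: showing that the finitely many elementary automorphisms over $K$ witnessing the transitivity can be realized by local slice constructions performed over $B$ (not merely over $R$ or $K$), which requires tracking denominators carefully and invoking factoriality of $B$ at each step.
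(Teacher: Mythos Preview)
Your plan for part~(a) is correct in outline but needlessly elaborate. Since $R = A_1 \cap A_2 \subseteq B$ and $K = \Frac R$, one has $K \otimes_R B = S^{-1}B$ with $S = R \setminus\{0\}$, which is \emph{directly} a localization of $B \in \Neul(\bk)$; there is no need to introduce an intermediate field $K_0$ or to pass to $\Neul(K_0)$. Theorem~\ref{dwddwdwdwddwd} applies with the original base field $\bk$, not with $K$ playing the role of~$\bk$. Also, your ``basis argument over $K$'' in the style of Lemma~\ref{jfewjfkjhlshf;ij;} does not make sense here: $K/\bk$ is not the relevant extension, and $S^{-1}B$ is not free over $B$. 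The paper instead observes that $S^{-1}A_1 \cap S^{-1}A_2 \subseteq K$ follows from factorial closedness of the $A_i$ in $B$ (or, equivalently, from the fact that localization commutes with finite intersections of submodules), and the reverse inclusion from $K^* \subseteq (S^{-1}B)^* \subseteq \ML(S^{-1}B)$.

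For part~(b) your broad strategy --- reduce to the Danielewski surface $\Beul = K \otimes_R B$ and then descend --- is right, but the mechanism you propose is both the wrong citation and the hard way. Transitivity of $\Aut_K(\Beul)$ on $\klnd(\Beul)$ (your reference to \cite[2.7.2]{Dai:LSC}) is not what is needed; what matters is connectedness of the graph $\Klnd_K(\Beul)$, which is \cite[4.8]{Dai:LSC}. More importantly, the ``descent'' you flag as the main obstacle is not done by hand in the paper: \cite[5.2--5.3]{Dai:LSC} gives, under the UFD hypothesis on $B$, a graph isomorphism $\Klnd_R(B) \cong \Klnd_K(\Beul)$, so connectedness over $K$ transfers immediately to a path from $A_1$ to $A_2$ in $\Klnd_R(B) \subseteq \Klnd(B)$. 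Your proposed ad hoc tracking of denominators through elementary automorphisms could perhaps be made to work, but it would amount to reproving a special case of that graph isomorphism.
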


\begin{remark*}
This generalizes results~1.10 and 1.13 of \cite{Dai:PolsAnnilTwoLNDS}.
Local slice construction was originally defined in \cite{Freud:LocalSlice}
in the case $B=\kk3$, and was later generalized in \cite{Dai:LSC}.
\end{remark*}

\begin{proof}[Proof of Corollary \ref{ofoofofoofofoo}]
Let $S = R \setminus \{0 \}$, $\Aeul_i = S^{-1}A_i$ ($i=1,2$) and
$\Beul = S^{-1}B = K \otimes_R B$.
If $D_i \in \lnd(B)$ has kernel $A_i$,
then $S^{-1}D_i \in \lnd( \Beul )$ has kernel $\Aeul_i$;
thus $\Aeul_1, \Aeul_2 \in \klnd( \Beul )$.
Using that $A_1, A_2$ are factorially closed in $B$, we obtain
$\Aeul_1 \cap \Aeul_2 \subseteq K$, so $\ML( \Beul ) \subseteq K$.
The reverse inclusion is trivial
($K^* \subseteq \Beul^* \subseteq \ML( \Beul )$), so $\ML( \Beul ) = K$.
Then $\Beul \in \Dan( K )$ by Theorem~\ref{dwddwdwdwddwd}, so
assertion~(a) is proved.

In \cite[3.3]{Dai:LSC}, one defines a graph $\Klnd(B)$ whose vertex-set
is $\klnd(B)$; then, given $A,A' \in \klnd(B)$, one says that
$A'$ can be obtained from $A$ ``by a local slice construction'' if
there exists an edge in $\Klnd(B)$ joining vertices $A$ and $A'$.
So assertion~(b) of the Corollary is equivalent to the existence of
a path in $\Klnd(B)$ going from $A_1$ to $A_2$.
Paragraph \cite[3.2.2]{Dai:LSC} also defines a subgraph
$\Klnd_R(B)$ of the graph $\Klnd(B)$, and clearly
$A_1, A_2$ are two vertices of $\Klnd_R(B)$;
so, to prove (b), it suffices to show that 
$\Klnd_R(B)$ is a connected graph.
We have $R \in \Reul^{\mbox{\scriptsize in}}(B)$ (cf.\ \cite[5.2]{Dai:LSC})
and consequently (cf.\ \cite[5.3]{Dai:LSC}, using that $B$ is a UFD)
we have an isomorphism of graphs $\Klnd_R(B) \isom \Klnd_K(\Beul)$.
As $\Beul \in \Dan( K )$ by part~(a),
we may apply \cite[4.8]{Dai:LSC} and conclude that
$\Klnd_K(\Beul)$ is connected.  Assertion~(b) is proved.
\end{proof}

The following is a trivial consequence of Corollary~\ref{ofoofofoofofoo}.

\begin{corollary}\label{dwdwdwdwrwdwrdwrwdrwdwr}
Let $B \in \Neul( \bk )$, where $\bk$ is a field of characteristic zero.
Suppose that $B$ has transcendence degree two over $\ML(B)$.
\begin{enumerate}

\item Let $R = \ML(B)$ and $K = \Frac R$.  Then $K \otimes_R B \in \Dan(K)$.

\item If $B$ is a UFD then, for any $A_1, A_2 \in \klnd(B)$,
there exists a finite sequence of local slice constructions which 
transforms $A_1$ into $A_2$.

\end{enumerate}
\end{corollary}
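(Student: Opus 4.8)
The plan is to reduce the statement to Corollary~\ref{ofoofofoofofoo} by showing that the hypothesis $\trdeg_{\ML(B)}(B)=2$ guarantees the existence of two distinct elements $A_1,A_2\in\klnd(B)$ whose intersection is exactly $\ML(B)$ and over which $B$ has transcendence degree~$2$.

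First I would exhibit the two kernels. Since $\trdeg_{\ML(B)}(B)=2$, in particular $\ML(B)\neq B$, so $\klnd(B)\neq\emptyset$; pick $A_1\in\klnd(B)$. Because $\trdeg_{A_1}(B)=1\neq 2=\trdeg_{\ML(B)}(B)$ we cannot have $A_1=\ML(B)$; since $\ML(B)$ is the intersection of the kernels of all the nonzero locally nilpotent derivations of $B$, it follows that some $A_2\in\klnd(B)$ satisfies $A_2\neq A_1$.

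Next I would identify $A_1\cap A_2$ with $R:=\ML(B)$. From $R\subseteq A_1\cap A_2\subseteq B$ and the additivity of transcendence degree we get $2=\trdeg_R(B)=\trdeg_R(A_1\cap A_2)+\trdeg_{A_1\cap A_2}(B)$; on the other hand the quoted fact about distinct elements of $\klnd(B)$ gives $\trdeg_{A_1\cap A_2}(B)\ge 2$, so in fact $\trdeg_{A_1\cap A_2}(B)=2$ and $A_1\cap A_2$ is algebraic over $R$. Since $R=\ML(B)$ is factorially closed, hence algebraically closed, in $B$ (by~\ref{fofpofpofpofpof}(\ref{vcvcvctgfhyegf})), this forces $A_1\cap A_2\subseteq R$, whence $A_1\cap A_2=R$.

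Finally I would invoke Corollary~\ref{ofoofofoofofoo}. Part~(a), applied to this pair $A_1,A_2$, says precisely that $K\otimes_R B\in\Dan(K)$, which is assertion~(1). For assertion~(2), let $A_1,A_2\in\klnd(B)$ be arbitrary: if $A_1=A_2$ there is nothing to prove (a sequence of length zero works), and if $A_1\neq A_2$ then the transcendence-degree computation above again yields $\trdeg_{A_1\cap A_2}(B)=2$, so Corollary~\ref{ofoofofoofofoo}(b) (whose UFD hypothesis holds here by assumption) produces the required finite sequence of local slice constructions. The only point that is not completely immediate is the equality $A_1\cap A_2=\ML(B)$, and as just indicated it reduces to a one-line transcendence-degree count together with the algebraic closedness of $\ML(B)$ in $B$; this is why the corollary is labelled a trivial consequence of Corollary~\ref{ofoofofoofofoo}.
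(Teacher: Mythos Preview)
Your proof is correct and is precisely the natural elaboration the paper has in mind when it calls this a ``trivial consequence'' of Corollary~\ref{ofoofofoofofoo}; the paper gives no explicit argument, and your reduction---finding distinct $A_1,A_2\in\klnd(B)$, using the inequality $\trdeg_{A_1\cap A_2}(B)\ge 2$ together with the algebraic closedness of $\ML(B)$ in $B$ to force $A_1\cap A_2=\ML(B)$, and then invoking Corollary~\ref{ofoofofoofofoo}---is exactly the intended route.
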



\bibliographystyle{amsplain}

\providecommand{\bysame}{\leavevmode\hbox to3em{\hrulefill}\thinspace}
\providecommand{\MR}{\relax\ifhmode\unskip\space\fi MR }
\providecommand{\MRhref}[2]{%
  \href{http://www.ams.org/mathscinet-getitem?mr=#1}{#2}
}
\providecommand{\href}[2]{#2}

\end{document}